\documentclass[11pt,a4paper]{amsart}

%%%%%%%%%%. Basic packages %%%%%%%%%%%%%%

\RequirePackage[numbers]{natbib}
\RequirePackage[colorlinks,citecolor=blue,urlcolor=blue]{hyperref}
\usepackage{graphicx}
\usepackage{enumitem}
\usepackage[utf8]{inputenc}
\usepackage[english]{babel}
\usepackage{latexsym}
\usepackage{amssymb}
\usepackage{amsmath}
\usepackage{dsfont}
%\usepackage{refcheck}

%%%%%%%%%% Lemma, prop, etc. %%%%%%%%%%%%%%

\newtheorem{thm}{Theorem}
\newtheorem{lemma}[thm]{Lemma}
\newtheorem{proposition}[thm]{Proposition}

\newtheorem{definition}[thm]{Definition}
\newtheorem{remark}[thm]{Remark}

%%%%%%%%%% \mathbb-macros %%%%%%%%%

\newcommand{\C}{\mathbb{C}}

\renewcommand{\H}{\mathbb{H}}

%%%%%%%%%% \mathbf-macros %%%%%%%%%%%%%%%%%%

%%%%%%%%%% \mathcal-macros %%%%%%%%%%%%%%%%%%

%%%%%%%%%%%% \mathsf-macro %%%%%%%

%%%%%%%%%%%% modifications %%%%%%

\renewcommand{\Im}{\mathrm{Im} \,}
\renewcommand{\epsilon}{\varepsilon}

%%%%%%%%%%%% GFF-notation %%%%%%%%%%%%%

\def\:{\mathop{:}}

\DeclareFontEncoding{LS1}{}{}
\DeclareFontSubstitution{LS1}{stix}{m}{n}
\DeclareSymbolFont{symbols2}{LS1}{stixfrak}{m}{n}
\DeclareMathSymbol{\typecolon}{\mathbin}{symbols2}{"25}

\def\;{\mathop{\typecolon}}

%%%%%%%%%% Ising notation %%%%%%%%%%%%%

\newcommand{\SLE}{$\operatorname{SLE}_\kappa$}
\newcommand{\SLEk}[1]{{$\operatorname{SLE}_{#1}$}}

%%% Remove these once things have been rewritten

%%%%%%%%%%%% Color %%%%%%%%%%%%%%

%%%%%%%%%%% MIsc %%%%%%%%%%%%%%%%

%\allowdisplaybreaks
\numberwithin{equation}{section}
\numberwithin{thm}{section}
\usepackage{fullpage}
%\usepackage[notref, notcite]{showkeys} %Comment to hide all the labels in the file

%%%%%%%%%%%% Authors & affiliation %%%%%%%%%%%%

\author{S. C. Park
}\address{
Sung Chul Park, University of Michigan, Department of Mathematics, \texttt{scpark@umich.edu}}

\title{Conformal Invariance of the FK-Ising Model on Lorentz-Maximal S-Embeddings}

\begin{document}

\begin{abstract}
We show on non-flat but critical s-embeddings the celebrated convergence of the interface curves of the critical FK Ising model to an \SLEk{16/3} curve, using discrete complex analytic techniques first used in \cite{smirnov, sle-convergence} and subsequently extended to more lattice settings including isoradial graphs \cite{chelkak-smirnov}, circle packings \cite{lis}, and flat s-embeddings \cite{chelkak}. In our setting, the s-embedding approximates a maximal surface in the Minkowski space $\mathbb R^{2,1}$, an `exact' criticality condition identified in \cite{chelkak}, which is stronger than the percolation-theoretic `near-critical' setup studied in, e.g., \cite{mahfouf}. The proof relies on a careful discretisation of the Laplace-Beltrami operator on the s-embedding, which is crucial in identifying the limit of the martingale observable.
\end{abstract}

\maketitle
{
  \hypersetup{linkcolor=black}
%  \tableofcontents
}

\section{Introduction}\label{sec:intro}
The \emph{planar Ising model}, ever since its exact solution on $\mathbb Z^2$ by Onsager \cite{onsager} and celebrated rigorous computations that appeared shortly after (e.g. \cite{wu, yang, wmtb}), has been one of the most significant models in statistical mechanics. The {Fortuin-Kasteleyn} (FK) or {Random Cluster} (RC) representation of the planar Ising model is a bond percolation model which is coupled to the classical (spin) Ising model \cite{edwards-sokal}. This \emph{FK-Ising model} not only is a deeply interesting model in itself but also has allowed for significant innovations in the perspective and machinery in working with the spin model, e.g. through scale-invariant crossing estimates \cite{ising-rsw}. For general background on these models, we refer to \cite{mccoy-wu, palmer, grimmett} and references therein.

The study of these models saw a breakthrough with the introduction of \emph{Schramm-Loewner Evolution} by Schramm \cite{schramm}, the putative conformally invariant scaling limit for many two-dimensional non-self-crossing random curves at criticality. This concrete probabilistic model added to an already rich literature in theoretical physics which managed to bootstrap assumptions of local symmetry to an intricate \emph{Conformal Field Theory} \cite{bpz}. While these predictions of emergent conformal invariance have been made for numerous discrete models which hypothetically undergo a continuous phase transition, rigorous confirmations remain scarce (e.g. \cite{lsw-lerw, dgff, smirnov-percolation}).

Nonetheless, for the specific case of the critical FK- and spin-Ising models, the natural \emph{discrete holomorphicity} \cite{mercat, chelkak-smirnov-i} of Kadanoff-Ceva type \emph{fermionic correlations} \cite{kadanoff-ceva}, along with its full utilisation in the form of \emph{s-holomorphicity} \cite{smirnov-fk-martingale, smirnov}, has yielded many verifications of conformal invariance, including convergence of the interface curves to \SLEk{16/3} under scaling limit \cite{smirnov, sle-convergence}. We refer to \cite{chelkak-icm} and references therein for this considerable body of work.

Incredibly, many such results have been valid under significantly more general setups than $\mathbb Z^2$, such as isoradial graphs \cite{chelkak-smirnov} and circle patterns \cite{lis}. Recently, Chelkak \cite{chelkak-icm, chelkak} has introduced the notion of \emph{s-embeddings}, which removes exact assumptions on the specific lattice structure and instead identifies what a natural embedding is for a given abstract graph and weights. Chelkak proved in \cite{chelkak} that the interface curves of the FK-Ising model converge to \SLEk{16/3} if its s-embedding is sufficiently flat, under natural non-degeneracy conditions; see \cite[Section 5.2]{cim-zigzag}, \cite[Section 3.3]{cim} for more examples of applications.

This change of perspective, from defining the model on pre-determined lattice structure to embedding an abstract weighted planar graph in three dimensions, makes it a natural tool to study small but generic perturbations of the weights or the model in random environments. Such settings often give rise to nonzero local \emph{mass}, or finite length scale, which would disallow scale-invariant scaling limits such as SLE. Still, since such local behaviour does not meaningfully alter global phenomena, particular care is warranted about the exact notion of criticality used.

Surprisingly, it turns out that the natural ambient space for the embedded planar graph is the Minkowski space $\mathbb R^{2,1}$ and the extrinsic \emph{Lorentz geometry} of the embedded discrete surface becomes relevant in this question. For example, the surface being uniformly space-like corresponds to the model being sufficiently \emph{near-critical}, i.e. we have uniform crossing estimates \cite{mahfouf} (so e.g. the percolation function vanishes); on the other hand, (a discretised notion of) the mean curvature should measure the mass \cite[Section 2.7]{chelkak}.

That is, if the s-embeddings approximate a surface with zero mean curvature at every point, i.e. is \emph{maximal}, the model should be (approximately) scale-invariant at every point and have conformally invariant scaling limit. This is precisely the setting in which we prove convergence of the interface curves to \SLEk{16/3}. Therefore, our result concerns the general case of possibly non-flat maximal surfaces, again under natural non-degeneracy conditions. In a sense, this treats the general setting for such convergence: as long as the s-embeddings approximate a continuous surface, convergence to SLE (and local notions of conformal invariance) is not expected to hold if the surface is not maximal.

We stress that, because isothermal coordinates do not respect local s-embedding structure and therefore is not a readily exploitable tool in the discrete setting, we \emph{cannot} easily flatten an s-embedded discrete surface and trivially reduce to arguments valid for the flat setting. Contrast this with, for example, the natural (continuum) definition for \SLE\ on $\Sigma$ given in Section \ref{subsec:sle-framework}, which may even use {any} isothermal coordinates to pull back to a flat domain thanks to the inherent conformal invariance in the measure.

Indeed, our main innovation is discovering a suitable discretisation process for the (principal symbol of) Laplace-Beltrami operator in the non-flat setting, whose justification relies on local flattening (i.e. making horizontal) of the s-embedding only using global isometries of the Minkowski space and rather careful estimates of the resulting error, carried out in Section \ref{subsec:s-laplacian-calculation}.

\subsection{Critical FK-Ising model on s-embeddings}
Let $G$ be an abstract planar graph (without multiple edges, loops, and vertices of degree one), with weights $x_e\in[0,1]$ assigned on edges $e\in \lozenge(G)$. The sets of vertices and edges of this graph are respectively written $G^\bullet$ and $\lozenge(G)$. By fixing an `unbounded face' (to be excluded), we consider the set of faces $G^\circ$, or the vertex set of the dual graph, with the same set of edges $\lozenge(G)$. We visualize each edge, or a \emph{quad}, $e\in \lozenge(G)$ as the abstract quadrilateral $(v_0^\bullet v_0^\circ v_1^\bullet v_1^\circ)$ bounded by the two primal and dual vertices (respectively $v_0^\bullet,  v_1^\bullet$ and $v_0^\circ,  v_1^\circ$) that it is incident to; specifying $e$ as either primal or dual simply dictates which pair are being connected by $e$. Let us also define $\Upsilon(G)$ as the set of medial edges, i.e. pairs $(v^\bullet v^\circ)$ where $v^\bullet$ and $v^\circ$ are incident, often identified with the midpoint of the two vertices; $(v^\bullet v^\circ)$ and $({w^\bullet} {w^\circ})$ are adjacent if exactly one of $v^\bullet = {w^\bullet}$ or $v^\circ = {w^\circ}$ holds. 

The \emph{FK-Ising} model on $(G, x)$ is the probability measure $\mathbb P$, on subsets $\mathsf{P}\subset E$, defined by
\begin{equation}\label{eq:fk-def}
    \mathbb P_{\text{FK}}[\mathsf P]\propto 2^{\#\operatorname{clusters}(\mathsf P)}\prod_{e\in \mathsf P} \frac{1-x_e}{x_e},
\end{equation}
where $\#\operatorname{clusters}(\mathsf P)$ is the number of vertex clusters connected by $\mathsf P$ as primal edges.

  There is an associated \emph{dual} configuration $\mathsf D := \lozenge(G) \setminus \mathsf P$ to each $\mathsf P$ (see Figure \ref{fig:lattice}). Given $(\mathsf P, \mathsf D)$, the \emph{interface curve} between a primal vertex cluster $\mathtt p$ connected by $\mathsf P$ and a dual vertex cluster $\mathtt d$ connected by $\mathsf D$ is the nearest-neighbor sequence of medial edges $(v^\bullet v^\circ)\in \Upsilon(G)$ such that we have $v^\bullet \in \mathtt p$ and $v^\circ \in \mathtt d$ (once an embedding is fixed, we visualize it as a smooth curve interpolating between the midpoint of each pair ($v^\bullet v^\circ$) to the next one, perpendicularly to the medial edges).

An \emph{s-embedding} of the pair $(G, x)$ is defined as any embedding (i.e. map, but we often identify an abstract graph element with its embedding) $\mathcal S:G^\bullet\cup G^\circ \to \mathbb C$ satisfying the following conditions: if $e=(v_0^\bullet v_0^\circ v_1^\bullet v_1^\circ)\in \lozenge(G)$, then
\begin{itemize}
    \item $\mathcal S(v_0^\bullet), \mathcal S(v_0^\circ), \mathcal S(v_1^\bullet), \mathcal S(v_1^\circ)\in \mathbb C$ serve as the corners of a quadrilateral in $\mathbb C$ that is \emph{tangential} (i.e. has an circumscribed circle); the centre of this circle is henceforth identified with $\mathcal S(e)$, extending $\mathcal S$ to $\lozenge(G)$.
    \item The weight $x_e$ is reflected in the quad by the formula
    \begin{equation}
        \tan\theta_e = \frac{|\mathcal{S}(v_0^\bullet)-\mathcal{S}(e)|\cdot |\mathcal{S}(v_1^\bullet)-\mathcal{S}(e)|}{|\mathcal{S}(v_0^\circ)-\mathcal{S}(e)|\cdot |\mathcal{S}(v_1^\circ)-\mathcal{S}(e)|},
    \end{equation}
    where $\theta_e\in [0,\frac\pi2]$ is defined by $\theta_e = 2\arctan x_e$.
\end{itemize}

$\mathcal S$ is called \emph{proper} if no two tangential quads as identified above overlap with each other. The tangential quad condition also allows us to lift $\mathcal S \in \mathbb C \cong \mathbb R^2$ to $(\mathcal S, \mathcal Q)\in \mathbb R^{2,1}$, by the following definition of $\mathcal Q:G^\bullet\cup G^\circ \to \mathbb R$. Given any $(v^\bullet v^\circ)\in \Upsilon(G)$, we define (note the asymmetry between the primal and dual graphs)
$$
\mathcal Q(v^\bullet) - \mathcal Q(v^\circ) = |S(v^\bullet) - S(v^\circ)| \geq 0.
$$
An equivalent condition for a quadrilateral to be tangential is that the two sums of opposite sides should coincide, exactly showing the well-definedness of $\mathcal Q$ around a single quad, and therefore everywhere. Note that $\mathcal S$ uniquely determines $\mathcal Q$ up to an additive constant. One hint that the ambient space should be taken with the Minkowski signature is the following \emph{Lorentz invariance} property: if we map an s-embedding $(\mathcal S, \mathcal Q)$ under an isometry of $\mathbb R^{2,1}$, the resulting embedded graph is still an s-embedding of the same weighted graph (see \cite[Remark 1.2]{chelkak}).

We consider space-like surfaces $\vartheta:\Omega \subset \mathbb R^2 \to \mathbb R$ whose second derivatives are bounded up to the boundary. This holds, in particular, if $\Omega$ is smooth and $\vartheta$ is maximal, i.e. has zero mean curvature as embedded in $\mathbb R^{2,1}$ (see also \eqref{eq:mean-curvature-vector}), with sufficiently regular boundary data \cite{minimal-surfaces}, but note that it is then also {inherited} by any possibly rough subdomain of $\Omega$, e.g. slit by a path. We state our regularity and criticality assumptions in terms of $\mathcal Q$. Assume we have a family of s-embeddings $(\mathcal{S}^\delta, \mathcal Q^\delta)$ for small $\delta>0$, and  define the following two conditions, both of which depend on a ($\delta$-uniform) constant $C>0$:
\begin{enumerate}
\item[$\operatorname{UNIF}(\delta)$:] we have $|S(v^\bullet) - S(v^\circ)| \in \left[C^{-1}{\delta},C{\delta}\right]$ for all medial edges $(v^\bullet v^\circ)\in \Upsilon(G)$, and all angles of the tangential quads are bounded below by $C^{-1}$.
\item[$\operatorname{Approx}(\vartheta,\delta)$:] we have $|\mathcal Q(v) - \vartheta\circ\mathcal S(v)| \leq C\delta$ for all $v\in (G^\bullet\cup G^\circ)$ such that $\mathcal S(v)\in \Omega$.
\end{enumerate}
$\operatorname{Approx}(\vartheta,\delta)$ is the conceptual condition asserting sufficient criticality for SLE convergence, and exactly reduces to the condition $\operatorname{FLAT}(\delta)$ in \cite[Section 1.2]{chelkak} when $\vartheta \equiv 0$. $\operatorname{UNIF}(\delta)$ is the regularity assumption, implying in particular a weaker condition $\operatorname{Lip}(\kappa,\delta)$ (see \cite[(1.12)]{chelkak} and discussion below) which discretises (and implies, through scaling limit) the non-degeneracy condition, being \emph{uniformly space-like}, of the surface $\vartheta$, i.e. $|\nabla \vartheta| \leq \kappa < 1$.

We now state our main result. To define the FK-Ising model with \emph{(Dobrushin) wired/free boundary conditions}, we assume we have for each $\delta$ nearest-neighbour, counter-clockwise sequences of primal vertices $( b^\delta a^\delta)^\bullet = \{v_1^\bullet , v_2^\bullet , \cdots , v_n^\bullet \}$ and dual vertices $(a^\delta b^\delta)^\circ =\{v_1^\circ , v_2^\circ , \cdots , v_{n'}^\circ\}$, with medial edges $a^\delta = (v_n^\bullet v_{1}^\circ)$ and $b^\delta = (v_{1}^\bullet v_{n'}^\circ)$ such that, on the s-embedding $\mathcal S^\delta$, the medial edges $a^\delta,b^\delta$ as well as the primal edges and dual edges traversed in $( b^\delta a^\delta)^\bullet,(a^\delta b^\delta)^\circ $ bound a simply connected polygonal domain $\Omega^\delta \subset \Omega$. Then we define the FK-Ising model with wired/free boundary conditions on $(\Omega^\delta,a^\delta,b^\delta)$ by considering the definition \eqref{eq:fk-def} on only those $\mathsf P$ that contain the boundary primal edges and not containing the boundary dual edges. Note that there are unique boundary primal and dual clusters $\mathsf{p}_b, \mathsf{d}_b$ defined this way, and a unique interface curve $\gamma$ connecting $a^\delta$ to $b^\delta$.

 It is standard that martingale convergence as in Theorem \ref{thm:martingale-convergence} and a Russo-Seymour-Welsh type crossing estimates, given in our case by \cite{mahfouf}, imply convergence to SLE, our main result; see e.g. \cite{sle-convergence, kemppainen-smirnov} and \cite[Remark 1.4]{chelkak} for more details. We also review the definition of \SLE\ on surfaces in the next section.
\begin{thm}\label{thm:interface-convergence}
    Suppose a family of s-embeddings $(\mathcal{S}^\delta, \mathcal Q^\delta)$ satisfies $\operatorname{UNIF}(\delta)$ and $\operatorname{Approx}(\vartheta,\delta)$ for some maximal $\vartheta:\Omega \subset \mathbb C \to \mathbb R$. We also assume the marked domain $(\Omega^\delta,a^\delta,b^\delta)$ converges to $(\Omega,a,b)$ as $\delta\downarrow 0$ in the Carath\'eodory sense (see \cite[Section 3.2]{chelkak-smirnov-i}). Then the interface curve from $a^\delta$ to $b^\delta$ converges to \SLEk{16/3} (see \cite{sle-convergence} or \cite{kemppainen-smirnov} for the precise mode of convergence).
\end{thm}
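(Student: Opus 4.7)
The strategy is the classical s-holomorphic one pioneered in \cite{smirnov, sle-convergence} and refined in \cite{chelkak-smirnov, chelkak}: reduce the problem to two pieces, namely convergence of a Kadanoff-Ceva-type martingale observable (Theorem \ref{thm:martingale-convergence}) and uniform crossing estimates. The second ingredient is supplied in exactly our regime by \cite{mahfouf}, and the passage from the two inputs to convergence of the interface curve in the usual topology on non-self-crossing curves is standard via the Kemppainen-Smirnov framework \cite{kemppainen-smirnov, sle-convergence}. The substantive work therefore all goes into the martingale observable convergence.

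Concretely, I would construct the usual complex-valued fermionic observable $F^\delta$ on the medial vertices of $(\Omega^\delta, a^\delta, b^\delta)$ exactly as in \cite{chelkak}, together with its real primitive $H^\delta$ defined on $\mathcal S^\delta(G^\bullet \cup G^\circ)$. By s-holomorphicity on any s-embedding, $H^\delta$ is sub-/super-harmonic (on primal/dual vertices respectively) with respect to the natural s-Laplacian, obeys Riemann-type boundary conditions on the wired and free arcs, and has a prescribed singular behaviour at $b^\delta$. Uniform boundedness and equicontinuity up to the boundary of $F^\delta$ follow from $\operatorname{UNIF}(\delta)$ together with Beurling-type estimates deduced from the crossing bounds of \cite{mahfouf}, giving precompactness of $(F^\delta, H^\delta)$ along any subsequence $\delta \downarrow 0$.

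The new ingredient is identifying the subsequential limit $(F,H)$. On flat s-embeddings this is straightforward because the discrete s-Laplacian approximates the ordinary $\Delta$, so $H$ is harmonic and pinned down by its boundary conditions and singularity. In our setting $\mathcal Q^\delta$ tracks a non-trivial vertical coordinate in $\mathbb R^{2,1}$, and I expect the discrete s-Laplacian to converge instead to the principal symbol of the Laplace-Beltrami operator $\Delta_\vartheta$ on the graph of $\vartheta$. The key point is that on a \emph{maximal} surface, in any isothermal chart, $\Delta_\vartheta$ is a positive scalar multiple of the flat Laplacian; hence $\Delta_\vartheta$-harmonic functions coincide with classical harmonic functions, which restores the conformal structure needed to match $H$ to the same limiting holomorphic function as in \cite{chelkak}. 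Loewner-theoretic extraction of the driving process from this limit then yields Theorem \ref{thm:martingale-convergence}, which combined with the crossing estimates yields Theorem \ref{thm:interface-convergence}.

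The main obstacle, and the heart of Section \ref{subsec:s-laplacian-calculation}, is proving that the discrete s-Laplacian really does converge to $\Delta_\vartheta$: isothermal coordinates do not respect the s-embedding combinatorics and are unavailable pre-limit, as emphasised in the introduction. I would instead, at each point of $\Omega$, apply a global Lorentz isometry of $\mathbb R^{2,1}$ horizontalising the tangent plane of $\vartheta$ there, compute the s-Laplacian in these locally flattened coordinates using the flat-setting expansions of \cite{chelkak}, and then track how the second-order curvature terms and the $\bigO(\delta)$ errors from $\operatorname{Approx}(\vartheta,\delta)$ accumulate through the weighted sum defining the Laplacian. Maximality of $\vartheta$ is precisely what forces the leading-order curvature contribution to cancel, leaving a conformal multiple of $\Delta$; quantitative control of the subleading error, under only the $C^2$-regularity of $\vartheta$ inherited by $\Omega^\delta$, is the delicate calculation.
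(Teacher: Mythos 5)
Your high-level plan is the one the paper follows: reduce to Theorem~\ref{thm:martingale-convergence} plus the RSW-type crossing estimates of \cite{mahfouf} via the Kemppainen--Smirnov framework, then identify the scaling limit of $(F^\delta, H^\delta)$ by analysing the s-Laplacian through local Lorentz flattening of the tangent plane, as carried out in Section~\ref{subsec:s-laplacian-calculation}. That much is correct and is indeed the substance of the paper.

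However, you misattribute the role of maximality, and this is a genuine conceptual gap that would mislead the computation. You write that maximality is what makes $\Delta_\vartheta$ a conformal multiple of the flat Laplacian in isothermal charts, and that ``maximality of $\vartheta$ is precisely what forces the leading-order curvature contribution to cancel.'' Neither is right. The identity $\Delta_\Sigma = l^{-2}\Delta_\zeta$ holds in isothermal coordinates for \emph{any} space-like surface --- that is what isothermal means --- independently of mean curvature. And in Steps~0--7 of Section~\ref{subsec:s-laplacian-calculation} the curvature of $\vartheta$ is not cancelled by maximality: the gap between $\Delta^\Sigma$ and the tangent-plane operator $\Delta^{\Sigma(u)}$ is the first-order operator $D^{\Sigma(u)}$ of \eqref{eq:tangent-plane-def}, and its contribution is controlled simply by the $C^2$-bound on $\vartheta$ together with the gradient bound on $H^\delta$ (Step~0), with no cancellation. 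Where maximality actually enters is on the continuum side: it guarantees that the conformal parametrisation is $\Delta^\Sigma$-harmonic, so the subsequential limit $\phi_d$ of \eqref{eq:phi-definition} is genuinely holomorphic and $h_d$ is $\Delta^\Sigma$-harmonic, pinning it down via the Dirichlet problem \eqref{eq:continuum-h-definition}. If $\vartheta$ were not maximal, the discrete estimates would go through unchanged but the limit would satisfy the massive equation of Remark~\ref{rem:lorentz-covariance}'s companion remark (the equation involving the mean curvature $H$), and uniqueness would have to be argued differently --- exactly the point of Remark~1.5 in the paper. Chasing a nonexistent maximality-induced cancellation in the discrete s-Laplacian expansion would be wasted effort, and, more importantly, you might overlook the step where the limit is actually identified as holomorphic via maximality.

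A second, smaller imprecision: you describe $H^\delta$ as ``sub-/super-harmonic on primal/dual vertices,'' which is the isoradial-lattice picture. On general s-embeddings the correct and crucial input is the single-signed s-positivity $\Delta_{\mathcal S}H^\delta \ge 0$ (equation~\eqref{eq:s-positivity}), which is what the Step~0--7 reduction ultimately exploits; the sub/super dichotomy appears only inside the coefficient structure of $\Delta_{\mathcal S}$ in Lemma~\ref{lem:s-laplacian}, not as a dichotomy for $\Delta_{\mathcal S}H^\delta$ itself.
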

\begin{figure}[h]
    \centering
    \includegraphics[width=\textwidth]{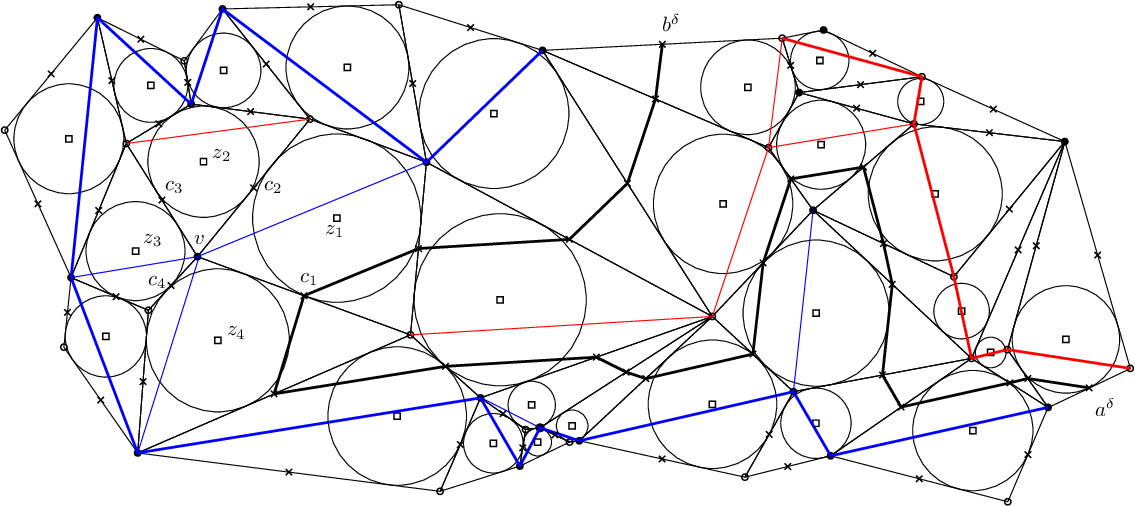}
    \caption{An example of s-embedded graph. Elements of $G^\bullet$, $G^\circ$, $\lozenge(G)$, and $\Upsilon(G)$ are respectively marked with $\bullet$, $\circ$, $\square$, and $\times$. Thick blue and red edges are respectively the primal and dual edges fixed by the boundary condition; thinner blue and red edges are the primal and dual edges specific to this particular configuration, the sampled blue edges $\mathsf{P}$ completely determining the red edges $\mathsf{D}$. The thick black path is the interface $\gamma$.}
    \label{fig:lattice}
\end{figure}
\subsection{SLE convergence framework}\label{subsec:sle-framework}
Given a real parameter $\kappa>0$, recall that \SLE\ on the upper half-plane $\H$ from $0$ to $\infty$ is a probability measure on random non-crossing paths from $0$ to $\infty$. It is characterised by quantities $g_t(z)$, for each $z\in \mathbb H$, defined by the initial condition $g_0(z)=z$ and the \emph{Loewner differential equation}
$$\partial_tg_t(z) = \frac{2}{g_t(z)-\sqrt{\kappa}B_t},$$
where $B_t$ is the standard Brownian motion. As a function of $z$, almost surely, each $g_t$ is a conformal map (random depending on the trace $B_{[0,t]}$) from $\mathbb H\setminus K_t$ for some curve $K_t \subset \mathbb H$ to $\mathbb H$, and the tip of the \SLE\ trace is defined as $\gamma_t := \lim_{z\to \sqrt{\kappa}B_t} g_t^{-1}(z)$.

\SLE\ on other simply connected domains $\Omega \subsetneq \C$ with marked boundary points (prime ends) $a, b\in \partial \Omega$ is defined as the pull-back of the \SLE\ measure by any Riemann map from $\Omega \to \H$ taking $a$ to $0$ and $b$ to $\infty$, which is well-defined thanks to the inherent conformal invariance in the law. In fact, we may define \SLE\ on any simply connected Riemannian surface with boundary, with marked boundary points, as the pull-back of the \SLE\ measure on its conformal parametrization (see Section \ref{subsec:continuum-geometry}).

The most common strategy to prove convergence of random discrete interfaces to \SLE\ has been through the so-called \emph{martingale observables} $F^\delta(\cdot;\Omega^\delta ,a^\delta,b^\delta)$. These are functions typically defined on parts of $\Omega$ or $\partial \Omega$ and parametrised by marked domains, taken as the domains $(\Omega_n^\delta,a_n^\delta,b^\delta)$ slit by the first $n$ steps of the discrete interface, such that for fixed $z$ we have that $F^\delta(z;\Omega_n^\delta,a_n^\delta,b^\delta)$ is a martingale. 

Concretely, consider the FK-Ising model with wired/free boundary conditions on $(\Omega^\delta,a^\delta,b^\delta)$. Given any medial edge $c = (c^\bullet c^\circ)$ on $\Omega^\delta$, we consider the following quantity, which goes back to Smirnov \cite{smirnov-fk-martingale, smirnov}: recall $b^\delta=(\mathcal{S}(v_1^\bullet)  \mathcal{S}(v_{n'}^\circ))$, then consider the following, which is well-defined up to a global sign,
\begin{equation}\label{eq:smirnov}
F^\delta(c;\Omega^\delta,a^\delta,b^\delta)= \mathbb E_{\text{FK}}\left[\frac{\mathbf{1}_{c\in \gamma}e^{-\frac{i\pi}{2}\#W(\gamma,c,b^\delta)}}{\sqrt{\operatorname{n}(b^\delta)}\sqrt{|\mathcal{S}(c^\bullet) - \mathcal{S}(c^\circ)|}} \right],\text{ where }\operatorname{n}(b^\delta):= \frac{\mathcal{S}(v_1^\bullet) - \mathcal{S}(v_{n'}^\circ)}{|\mathcal{S}(v_1^\bullet) - \mathcal{S}(v_{n'}^\circ)|},
\end{equation}
$\gamma$ is the interface curve, and $W(\gamma)$ is the total turning of the tangent vector of $\gamma$ from the midpoint of $c$ to that of $b^\delta$ (recall the medial edges are traversed perpendicularly). Note that the phase factors ${{\operatorname{n}(b^\delta)}^{-\frac12}}{e^{-\frac{i\pi}{2}\#W(\gamma,c,b^\delta)}}$ implies that $F^\delta(c;\Omega^\delta,a^\delta,b^\delta) \in \operatorname{n}(c)^{-\frac12}\mathbb R$. We elaborate more on this discrete function in Section \ref{subsec:discrete-geometry}.

Convergence of this quantity to an analogous martingale in continuum is sufficient to uniquely identify the limiting distribution as \SLEk{16/3}. Indeed, the continuum limit $f(\cdot;\Omega,a,b)$ is the natural pull-back of the observables employed in \cite{smirnov,sle-convergence,chelkak} for the s-embedding setting; in terms of isothermal coordinates
$$\mathbb R\times (0,1) \xrightarrow{\zeta \mapsto z(\zeta)} \Omega,\,z(\infty)=a,\,z(-\infty)=b$$
that conformally parametrize the maximal surface $\vartheta$, we have
$$
f(z(\zeta);\Omega,a,b) = \frac{ \overline{(\partial_{\zeta} z(\zeta))^{1/2}} - i (\partial_{\zeta} \bar z(\zeta))^{1/2}}{|\partial_{\zeta} z(\zeta)| - |\partial_{\zeta} \bar z(\zeta)|}.
$$
See Section \ref{subsec:continuum-geometry} for more context on the continuum martingale.

Write $\operatorname{Proj}_{e^{i\theta}\mathbb R}A = \frac{A+e^{2i\theta}\bar A}{2}$ for the orthogonal projection of $A\in \mathbb C$ onto the line $e^{i\theta}\mathbb R$.
\begin{thm}\label{thm:martingale-convergence}
    Suppose a family of s-embeddings $(\mathcal{S}^\delta, \mathcal Q^\delta)$ satisfies $\operatorname{UNIF}(\delta)$ and $\operatorname{Approx}(\vartheta,\delta)$ for some maximal $\vartheta:\Omega \subset \mathbb C \to \mathbb R$. We also assume the marked domain $(\Omega^\delta,a^\delta,b^\delta)$ converges to $(\Omega,a,b)$ as $\delta\downarrow 0$ in the Carath\'eodory sense (see \cite[Section 3.2]{chelkak}). Then the discrete martingale observable $F^\delta(\cdot;\Omega^\delta,a^\delta,b^\delta)$ converges to $f(\cdot;\Omega,a,b)$, in the sense that,
        $$
         F^\delta(c;\Omega^\delta,a^\delta,b^\delta) = \operatorname{Proj}_{n(c)^{-\frac12}\mathbb R}f(c;\Omega,a,b) + o(1),
        $$
    uniformly for medial vertices $c$ on compact subsets of $\Omega$. 
\end{thm}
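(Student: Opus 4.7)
The plan is to adapt the now-standard s-holomorphic machinery \cite{smirnov, sle-convergence, chelkak-smirnov, chelkak} to the non-flat maximal setting. First, from the Kadanoff-Ceva fermion representation of the FK-Ising model and the weight identity characterising tangential quads in an s-embedding, one verifies that $F^\delta(\cdot; \Omega^\delta, a^\delta, b^\delta)$ is s-holomorphic on $(\mathcal{S}^\delta, \mathcal{Q}^\delta)$; the phase factor in \eqref{eq:smirnov} is precisely what places $F^\delta(c)$ on the line $\operatorname{n}(c)^{-1/2} \mathbb R$, enabling the projection identity that defines s-holomorphicity. Second, because $F^\delta$ is s-holomorphic, the discrete 1-form $(F^\delta)^2 \, d\mathcal{S}$ admits a real primitive $H^\delta$ defined separately on $G^\bullet$ and $G^\circ$, whose values on the primal and dual boundary arcs are forced to be constant by the wired/free boundary condition. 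Both of these steps are essentially the same as in the flat setting treated in \cite{chelkak}, and will be reviewed in Section \ref{subsec:discrete-geometry}.

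The technical core is to establish that $H^\delta$ is approximately harmonic with respect to the Laplace-Beltrami operator $\Delta_\vartheta$ of the limiting maximal surface. I would construct a discrete second-order operator $\Delta^\delta$ acting on $(H^\delta, \mathcal{Q}^\delta)$ such that, for each test point $z \in \Omega$ and any vertex $v$ with $\mathcal{S}^\delta(v) \to z$, one has $(\Delta^\delta H^\delta)(v) = (\Delta_\vartheta H)(z) + o(1)$. The construction, to be carried out in Section \ref{subsec:s-laplacian-calculation}, proceeds by local flattening: apply an isometry of $\mathbb R^{2,1}$ that maps the tangent plane of $\vartheta$ at $z$ to the horizontal plane, apply the flat s-embedding computation of \cite{chelkak} in this chart, and then Taylor-expand back to the global frame. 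A first-order error proportional to $\nabla \vartheta$ is killed by centring the Lorentz frame, while the remaining second-order error has principal symbol given (up to lower-order pieces) by a contraction of the Hessian of $\vartheta$ that, in the induced Minkowski metric, is proportional to the mean curvature of the graph of $\vartheta$; this contribution therefore vanishes identically under the maximality hypothesis. The conditions $\operatorname{UNIF}(\delta)$ and $\operatorname{Approx}(\vartheta, \delta)$ are then used to absorb the residual errors into $o(1)$.

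With near-harmonicity of $H^\delta$ in hand, precompactness of the pair $(F^\delta, H^\delta)$ on compact subsets of $\Omega$ follows from discrete elliptic regularity for variable-coefficient operators that remain uniformly elliptic under $\operatorname{UNIF}(\delta)$. Any subsequential limit $(F, H)$ satisfies $\Delta_\vartheta H = 0$, with $F$ holomorphic in the isothermal parametrisation of $(\Omega, \vartheta)$ discussed in Section \ref{subsec:continuum-geometry}. The boundary behaviour ($H$ constant on each of the primal and dual arcs) propagates to the continuum through the Carath\'eodory convergence of $(\Omega^\delta, a^\delta, b^\delta)$ together with the RSW estimates of \cite{mahfouf}, which rule out boundary degenerations; this identifies $H$ uniquely as the imaginary part of the conformal map sending $(\Omega, a, b)$ to $(\mathbb R \times (0,1), +\infty, -\infty)$. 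The square-root relation $F^2 \, d\mathcal{S} \sim dH$ in the appropriate branch then reproduces the explicit formula for $f$ given in Section \ref{subsec:sle-framework}, and uniqueness of the limit upgrades subsequential convergence to convergence along the full family $\delta \downarrow 0$.

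The main obstacle will be the third step above: isolating the principal symbol of the discrete s-Laplacian applied to $H^\delta$, and arranging the computation so that maximality of $\vartheta$ is precisely the condition cancelling the leading-order correction, with an explicit $o(1)$ control on the remainder. All other steps are careful but largely standard adaptations of the flat s-embedding framework, whereas this step is where the Minkowski-geometric content of the paper and the specific use of the maximal-surface PDE are concentrated.
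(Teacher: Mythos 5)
Your outline captures the correct high-level architecture --- s-holomorphicity of $F^\delta$, the primitive $H^\delta$ with wired/free boundary conditions, Lorentz isometries for local flattening, precompactness, and identification of the subsequential limit --- and correctly identifies that the technically novel content is the Laplace-Beltrami discretisation in the non-flat setting. However, there are two substantive discrepancies with what the paper actually does, both in the technical core.

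First, you propose to build a discrete operator $\Delta^\delta$ so that $(\Delta^\delta H^\delta)(v) = (\Delta_\vartheta H)(z)+o(1)$ pointwise at vertices. This would amount to showing that $H^\delta$ is approximately harmonic at the lattice scale, which fails: the paper's s-Laplacian satisfies only the one-sided \emph{s-positivity} property $\Delta_{\mathcal S} H^\delta \geq 0$ (equation \eqref{eq:s-positivity}), and at individual vertices $\Delta_{\mathcal S}H^\delta$ need not be small. What the paper does instead is mollify $H^\delta$ to $\tilde H^\delta$ over a mesoscopic scale $\rho_u = \delta^\epsilon\operatorname{crad}(u,\Omega^\delta)$ and then bound the \emph{continuum} Laplace-Beltrami operator $\Delta^\Sigma\tilde H^\delta(u)$. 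The chain of transformations in Steps 0--7 passes $\Delta^\Sigma$ from the test function $\varphi^u$, through discrete operators, so that at the end s-positivity of $\Delta_{\mathcal S}H^\delta$ can be invoked against the nonnegative mollifier. This weak/averaged formulation is not a presentational choice but is necessary; a pointwise discrete harmonicity claim would not hold.

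Second, you place the role of maximality inside the discrete Laplacian computation, asserting that the leading Hessian contraction is proportional to the mean curvature and hence vanishes. The paper's Remark \ref{rem:lorentz-covariance} neighbourhood and, more explicitly, the remark after Theorem \ref{thm:martingale-convergence}, say the opposite: the discrete estimates in Section \ref{subsec:s-laplacian-calculation} do not use maximality at all, and the same bounds would go through for a non-maximal surface. Maximality enters only at the continuum level, to ensure that the conformal parametrisation has harmonic components, hence that $\phi_d$ (via \eqref{eq:phi-definition} and \cite[Prop.~2.21]{chelkak}) is holomorphic and the Dirichlet problem \eqref{eq:continuum-h-definition} with the Laplace-Beltrami operator has a unique solution. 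In other words, the paper proves a bound of the form $\Delta^\Sigma\tilde H^\delta(u)=O(\delta^\alpha d_u^{-2+\alpha})$ whose purpose is boundary regularity (Proposition \ref{prop:laplace-beltrami}); interior harmonicity of the limit $h_d$ comes essentially for free from the known continuum identities once one knows $F^\delta\to f_d$ uniformly on compacts (which is Chelkak's s-holomorphic regularity, Proposition \ref{prop:regularity}, not variable-coefficient elliptic regularity applied to $H^\delta$ as you suggest). You should therefore recast your "technical core" as a boundary-regularity estimate for the mollified primitive, driven by s-positivity rather than by discrete near-harmonicity, and separate out the use of maximality as a purely continuum input.
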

\begin{proof}
    See Section \ref{subsec:discrete-geometry}. There we note that there must be some subsequential limit $f_d$ for the functions $F^\delta$, so it remains to uniquely identify the limit as $f$. The main ingredient in doing so is Proposition \ref{prop:laplace-beltrami}, proved in Section \ref{subsec:s-laplacian-calculation}, showing that its \emph{(square-)integral} $h_d$, defined as in \eqref{eq:continuum-integral}, takes the boundary values \eqref{eq:continuum-h-definition}.
\end{proof}

\begin{remark}
    A simple but rich class of non-flat maximal s-embeddings is given by triangulations of $\Omega$, whose vertices are taken to be $G^\bullet$. After letting $\mathcal{Q}^\delta = \vartheta$ at these primal vertices, $(\mathcal S^\delta, \mathcal Q^\delta)$ of the dual vertex within any triangular face is uniquely determined (e.g. under an isometry sending the triangle to a horizontal one, the dual vertex is sent to the circumcentre). It is elementary to see then the resulting s-embedding is $\operatorname{UNIF}(\delta)$, constant depending on $\kappa$, as long as the initial triangulation had uniformly bounded angles and lengths.
\end{remark}

\begin{remark}
    The arguments going into Theorem \ref{thm:martingale-convergence} does not use maximality of $\vartheta$ except to define the continuum martingale uniquely, using a Dirichlet boundary problem \eqref{eq:continuum-h-definition} and the Laplace-Beltrami operator $\Delta^\Sigma$. That is, we may trivially extend the Theorem to massive settings where we have uniqueness of Dirichlet boundary problem for, say, the following equation on the conformal parametrisation (see \cite[Section 2.7]{chelkak} and \cite[Section 5.1]{park})
    $$
    \Delta_\zeta (h\circ z) = 2{(H\circ z)\cdot l}\cdot|\nabla (h\circ z)|,
    $$
    where $l$ and $H$ are respectively the metric element and the mean curvature (see Section \ref{subsec:continuum-geometry}). 
\end{remark}

\subsection{Outline of the paper}
The rest of this paper is organised as follows. We first recall the basic language of embedded space-like surfaces in $\mathbb R^{2,1}$ in Section \ref{subsec:continuum-geometry}. Section \ref{subsec:discrete-geometry} recalls known fundamental properties and the necessary analytic framework for the discrete martingale observable, mostly established in \cite{chelkak}, in particular explaining that Theorem \ref{thm:martingale-convergence} is proved given Proposition \ref{prop:laplace-beltrami}. Then we go on to prove Proposition \ref{prop:laplace-beltrami} in Section \ref{subsec:s-laplacian-calculation}.

\medskip

{\bf Acknowledgments:} The author is indebted to Dmitry Chelkak for helpful conversations and advices, which was crucial especially for this work. He also thanks R\'emy Mahfouf for many foundational conversations on this topic. This work was partially accomplished while the author was employed as Research Fellow at Korea Institute for Advanced Study (KIAS), supported by a KIAS Individual Grant (MG077202).

\section{Convergence of the Martingale Observable}

\subsection{Lorentz geometry and continuum observables}\label{subsec:continuum-geometry}

We now briefly recall the elements of Lorentz geometry needed to uniquely define the continuum functions that the martingale observables converge to. See \cite{geometry-reference} for general reference on extrinsic geometry in pseudo-Riemannian manifolds, as well as \cite[Section 2.7]{chelkak} for analysis specific to this case.

Consider a space-like surface $\vartheta:\Omega\subset \mathbb R^2 \to \mathbb R$ embedded as $\Sigma \subset \mathbb R^{2,1}$. If $\vartheta$ is sufficiently regular (as it is if $\Sigma$ is maximal), the \emph{Laplace-Beltrami operator} $\Delta^\Sigma$ is given in terms of the coordinates $z= x_1 + ix_2$ and metric $g$ as the following
\begin{equation}\label{eq:laplace-beltrami-def}
    \Delta^\Sigma h = \frac{1}{\sqrt{|g|}}\partial_{x_i}\left(\sqrt{|g|}g^{ij}\partial_{x_j} h\right).
\end{equation}
Observing this formula, let us define the \emph{tangent plane Laplacian} $\Delta^{\Sigma(a)}$ for $a\in \Omega$ as the Laplace-Beltrami operator on the tangent plane of $\Sigma$ at $(a, \vartheta(a))$, having constant coefficients. Note that there is a first-order operator $D^{\Sigma(a)}$ with constant coefficients such that we have
\begin{equation}\label{eq:tangent-plane-def}
    \Delta^\Sigma h(a) = \Delta^{\Sigma(a)}h(a) + D^{\Sigma(a)}h(a).
\end{equation}

There is a conformal parametrisation (or isothermal coordinates) $\zeta = \zeta_1 + i\zeta_2\in U\subset \mathbb C \mapsto (z(\zeta),\vartheta \circ z(\zeta))) \in \Sigma$, in which the metric on $\Sigma$ is described by a single isotropic element $l(\zeta) = (|\partial_{\zeta} z|^2+|\partial_{\zeta} \bar z|^2-2|\partial_{\zeta} (\vartheta\circ z)|^2)^{1/2} = |\partial_{\zeta} z| - |\partial_{\zeta} \bar z| >0$, since $(\partial_{\zeta} (\vartheta\circ z))^2 = (\partial_{\zeta} z)\cdot (\partial_{\zeta} \bar z)$ under the conformal parametrization assumption (see \cite[(2.27)]{chelkak}).

Then $\Delta^\Sigma$ on $\Sigma$ may be related to the flat laplacian $\Delta_{\zeta}$ on $U$ by
\begin{align}
    \Delta_{\Sigma} = l^{-2}\Delta_{\zeta}.
\end{align}
In terms of this operator, the mean curvature vector (that is, the scalar mean curvature times the unit normal vector) in $\mathbb R^{2,1}$ is written
\begin{align}\label{eq:mean-curvature-vector}
    \vec H = (\Delta^\Sigma z,\Delta^\Sigma (\vartheta\circ z)).
\end{align}
In particular, the components of a conformal parametrization are harmonic if and only if the surface is maximal.

Given a marked domain $(\Omega, a, b)$ and a maximal surface $\vartheta:\Omega \to \mathbb R$, there is a unique harmonic function, the \emph{(square-)integrated continuum martingale} $h = h(\cdot;\Omega,a,b):\Omega \mapsto [0,1]$ such that
\begin{equation}\label{eq:continuum-h-definition}
\begin{cases}
    \Delta^\Sigma\hspace{-1em}&h = 0 \text{ in }\Omega,\\
    &h = 0\text{ on }(ab),\\
    &h  = 1\text{ on }(ba),
\end{cases}
\end{equation}
which, e.g., may be easily seen using the conformal parametrization.

Then, the \emph{(conformal domain) continuum martingale} $\phi = \phi(\cdot;U,z^{-1}(a),z^{-1}(b)):U \to \mathbb C$, defined up to a global sign, is the holomorphic function $\phi(\zeta) := \sqrt{\partial_{\zeta}h(z(\zeta),\vartheta\circ z(\zeta))}$. Note that it is straightforward from the uniqueness of $h$ to verify that $\phi$ coincides with $\sqrt{\Phi'}$, where $\Phi$ is the conformal map mapping $U$ to the strip $\mathbb R\times(0,1)\subset \mathbb C$, with the marked points $z^{-1}(a)$ and $z^{-1}(b)$ respectively mapped to $\infty$ and $-\infty$.

If $U_t$ is the domain $(U_t,z^{-1}(a_t),z^{-1}(b))$ slit by \SLEk{16/3} from $z^{-1}(a)$ to $z^{-1}(a)$, it is standard (see, e.g. \cite{smirnov-fk-martingale}) that $t\mapsto \phi(\zeta_0 ;U_t,z^{-1}(a_t),z^{-1}(b))$ is a martingale for any fixed $\zeta_0\in U$ while $\zeta_0$ remains in $U_t$; of course, given our definition of \SLE\ using pullbacks, we may equivalently describe $U_t$ as the conformal parametrization of the domain $(\Omega_t, a_t, b)$ slit by \SLEk{16/3} on $(\Omega, a, b)$ (here we use the conformal parametrization $\zeta \mapsto z(\zeta)$ for $t=0$ for all times $t$).

On the physical (s-embedded) domain $\Omega$ though, we instead consider the \emph{(physical domain) continuum martingale} $f=f(\cdot;\Omega,a,b):\Omega \to \mathbb C$ defined by the relation
\begin{equation}\label{eq:phi-definition}
\phi = (f\circ z)\cdot(\partial_\zeta z)^{1/2} + i (\bar f\circ z)\cdot(\partial_\zeta \bar z)^{1/2}\text{, or }f\circ z = \frac{\phi \overline{(\partial_{\zeta} z)^{1/2}} - i\bar \phi (\partial_{\zeta} \bar z)^{1/2}}{|\partial_{\zeta} z| - |\partial_{\zeta} \bar z|}.
\end{equation}
By linearity, it is clear that $f(z_0 ;\Omega_t,a_t,b)$ is also a martingale for any fixed $z_0$ and $(\Omega_t, a_t, b)$ slit by \SLEk{16/3} as above. While $f$ may be obtained from $h$ by going to the conformal parametrization, we note the following direct relation between them:
\begin{align} \nonumber
    h\circ z &= \Im \int \frac{\phi^2}{2} d\zeta\\ \nonumber
    &= \Im \int \frac{(f\circ z)^2(\partial_\zeta z) - (\bar f\circ z)^2(\partial_\zeta \bar z) + 2i|f\circ z|^2(\partial_\zeta z)^{1/2}(\partial_\zeta \bar z)^{1/2}}{2} d\zeta\\ \label{eq:continuum-integral}
    &=\Im \int \partial_\zeta\Im\int f^2 dz +i|f\circ z|^2(\partial_\zeta \vartheta) d\zeta=\Im\int \frac{1}{2} \left(f^2dz +i|f|^2d \vartheta\right).
\end{align}

\subsection{Analysis of the discrete observable}\label{subsec:discrete-geometry}
While the definition \eqref{eq:smirnov} is fully self-contained, we find it helpful to interpret the observable as a correlation in the physical model. Note that $\mathbf{1}_{z \in \gamma} = \mathbf{1}_{c^\bullet \in \mathsf{p}_b}\mathbf{1}_{c^\circ \in \mathsf{d}_b}$, i.e. the indicator of simultaneous primal-dual connection of $c$ to respective boundary clusters. Indeed, it is known that $$F^\delta(c;\Omega^\delta,a^\delta,b^\delta) = \operatorname{n}(c)^{\frac12}\mathbb E[ \sigma_{\mathsf{p}_b}\mu_{\mathsf{d}_b}\sigma_{c^\bullet}\mu_{c^{\circ}}],$$ where $X_{c}:=\mathbb E[ \sigma_{\mathsf{p}_b}\mu_{\mathsf{d}_b}\sigma_{c^\bullet}\mu_{c^{\circ}}] \in \mathbb R$ is an (embedding-agnostic) correlation of the Kadanoff-Ceva fermions $\sigma_{c^\bullet}\mu_{c^{\circ}}$ \cite{kadanoff-ceva} for the corresponding spin-Ising model. The sign of $X_c$ is a priori ambiguous (e.g. note that we need to fix a sign of $\operatorname{n}(c)^{\frac12}$ for each $c$), but there is a way to define it as a single-valued function on a suitable \emph{double cover} of the medial edge set (see \cite[Sections 1.2, 2.1]{chelkak} and, e.g. \cite[Figure 2.1]{chi2}).

In particular, $X_c^2$ has no sign ambiguity. If $e=(v_0^\bullet v_0^\circ v_1^\bullet v_1^\circ)$ is an embedded quad in $\Omega^\delta$, then we have
\begin{equation}\label{eq:propagation-squared}
X_{(v_0^\bullet v_0^\circ)}^2 + X_{(v_1^\bullet v_1^\circ)}^2 = X_{(v_1^\bullet v_0^\circ)}^2 + X_{(v_0^\bullet v_1^\circ)}^2,
\end{equation}
by the so-called propagation equation \cite[(1.5)]{chelkak}. Then we  define the \emph{(square-)integrated martingale observable} $H^\delta :G^\bullet\cup G^\circ \to \mathbb R$, up to an additive constant, similarly to $\mathcal Q$: given any medial edge $(v^\bullet v^\circ)$, we define
\begin{equation}\label{eq:discrete-h-def}
H^\delta(v^\bullet) - H^\delta(v^\circ) = X_{(v^\bullet v^\circ)}^2 \geq 0,
\end{equation}
where $H^\delta$ is well-defined again thanks to \eqref{eq:propagation-squared}. In addition, the wired/free boundary conditions for the FK-Ising model translates to discrete boundary conditions exactly mirroring (after choosing the additive constant appropriately) the boundary conditions \eqref{eq:continuum-h-definition}: we have \cite[(2.18)]{chelkak}
\begin{equation}\label{eq:discrete-h-boundary}
H^\delta(v^\circ) = 0\text{ for }v\in (a^\delta b^\delta)^\circ\text{\,\,\,\, and\,\,\,\, }H^\delta(v^\bullet) = 1\text{ for }v\in ( b^\delta a^\delta)^\bullet.
\end{equation}
In addition, thanks to the maximum principle for $H^\delta$, we have a global bound $|H^\delta| \leq 1$ \cite[Proposition 2.11, Corollary 2.12]{chelkak}.

The propagation equation is equivalent to the notion of \emph{s-holomorphicity}, first introduced in \cite{smirnov}; in the notation above, it says that there is a unique value $F(e)$ for the quad itself, such that $F(c)=\operatorname{Proj}_{n(c)^{-\frac12}\mathbb R}F(e)$ for the 4 incident medial edges $c = (v_0^\bullet v_0^\circ),(v_1^\bullet v_1^\circ),(v_1^\bullet v_0^\circ),(v_0^\bullet v_1^\circ)$ \cite[Proposition 2.5, Corollary 2.6]{chelkak}  (note that Theorem \ref{thm:martingale-convergence} then becomes equivalent to the statement that $F^\delta(\cdot;\Omega^\delta,a^\delta,b^\delta) \to f(\cdot;\Omega,a,b)$ uniformly on compacts).

In terms of the values $F(e)$, we may verify from \eqref{eq:discrete-h-def} the following discrete integral formulation \cite[(2.17)]{chelkak}
\begin{equation}\label{eq:discrete-integral}
 d^\delta H^\delta= \Im \frac{1}{2} \left(F^\delta(e)^2 d^\delta S +i|F^\delta(e)|^2d^\delta \mathcal Q\right),
\end{equation}
where $d^\delta$ refers to a difference across the quad (say, the value at $v_0^\bullet$ minus $v_0^\circ$, etc.). Note this is clearly the discrete counterpart of \eqref{eq:continuum-integral}.

The primary ingredient in showing pre-compactness under scaling limits is as follows. While the following is a general statement for s-holomorphic functions, we state it for $H^\delta$ using the global bound $|H^\delta| \leq 1$; a simple re-scaling recovers the general bound. We now consider $F^\delta$ as being defined on $\Omega^\delta$ by, say, piecewise affine interpolation.
\begin{proposition}[{\cite[(2.23), Theorem 2.17]{chelkak}}]\label{prop:regularity}
    Suppose a family of s-embeddings $(\mathcal{S}^\delta, \mathcal Q^\delta)$ satisfies $\operatorname{UNIF}(\delta)$. Then there is are constants $C>0,\beta>0$ (depending on the $\operatorname{UNIF}(\delta)$ constant) such that for any $z_1, z_2\in B_{r}(z_0)\subset B_{2r}(z_0) \subset \Omega^\delta$, we have
    \begin{equation}\label{eq:regularity}
        |F^\delta(z_1)| \leq \frac{C}{\sqrt{r}}\text{\,\,\,\, and \,\,\,\,}|F^\delta(z_1)-F^\delta(z_2)| \leq {C}\frac{|z_1-z_2|^\beta}{r^{\frac12+\beta}}.
    \end{equation}
\end{proposition}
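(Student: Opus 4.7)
The plan is to deduce both estimates from the bounded integrated primitive $H^\delta$. The strategy follows the established route for s-holomorphic functions on isoradial and flat s-embedded graphs: first establish a discrete Harnack-type gradient estimate for $H^\delta$ on balls of radius $r$, then translate it into a pointwise bound for $|F^\delta|^2$, and finally iterate on smaller concentric scales to obtain H\"older continuity.

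For the first two steps, I would use that s-holomorphicity of $F^\delta$ implies that $H^\delta$ restricted to $G^\bullet$ is superharmonic and restricted to $G^\circ$ is subharmonic for a suitable discrete Laplacian built from the s-embedding. Under $\operatorname{UNIF}(\delta)$, the uniform lower bounds on tangential-quad angles and edge lengths guarantee that this Laplacian is uniformly elliptic on scale $\delta$, so that the discrete versions of the maximum principle, Harnack inequality, and Moser/De Giorgi iteration can be applied uniformly in $\delta$. This yields the discrete gradient estimate
\begin{equation*}
|H^\delta(v^\bullet) - H^\delta(v^\circ)| \leq C\frac{\delta}{r}\,\|H^\delta\|_{L^\infty(B_{2r}(z_0))}
\end{equation*}
on medial edges $(v^\bullet v^\circ)$ whose midpoints lie in $B_r(z_0)$. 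Combining with the identity $H^\delta(v^\bullet)-H^\delta(v^\circ) = X^2_{(v^\bullet v^\circ)}$ from \eqref{eq:discrete-h-def} and the relation $X^2_c = |F^\delta(c)|^2\cdot|\mathcal{S}(c^\bullet)-\mathcal{S}(c^\circ)| \gtrsim \delta|F^\delta(c)|^2$ immediately gives $|F^\delta(c)|^2 \leq C/r$ at all medial edges $c$ inside $B_r(z_0)$; together with the global bound $|H^\delta|\leq 1$ and the piecewise-affine interpolation defining $F^\delta$ on $\Omega^\delta$, this closes the first inequality in \eqref{eq:regularity}.

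For the H\"older bound, I would iterate the $L^\infty$ bound on nested concentric balls: working in a sub-ball $B_{r'}(z_1)\subset B_r(z_0)$ and subtracting from $H^\delta$ a suitable affine function matching its first-order behaviour near $z_1$, the same Harnack-type gradient estimate applies to the residual. A standard Campanato / discrete-Schauder iteration then produces an oscillation bound of the form $|F^\delta(z_1)-F^\delta(z_2)| \leq C|z_1-z_2|^\beta\cdot r^{-1/2-\beta}$ for some $\beta>0$, which is the second inequality in \eqref{eq:regularity}. An equivalent viewpoint uses the quad-level value $F^\delta(e)$ of the s-holomorphic function (see the discussion following \eqref{eq:discrete-integral}): its discrete gradient satisfies a mean-value property inherited from the discrete harmonicity of $H^\delta$, and iterating this property across scales yields the H\"older exponent directly.

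The principal obstacle is the very first step: since under $\operatorname{UNIF}(\delta)$ the local geometry of the s-embedding varies from quad to quad, the associated discrete Laplacian does not reduce to a fixed-lattice finite-difference operator, and one must carefully track the uniform ellipticity of its coefficients through the tangential-quad angle and edge-length bounds. The discrete elliptic regularity machinery needed to handle this general setting—including the super/subharmonicity decomposition of $H^\delta$, the discrete Harnack inequality on s-embeddings, and the accompanying De Giorgi iteration—is carefully developed in \cite{chelkak}, which we invoke directly rather than redo.
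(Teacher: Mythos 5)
The paper does not prove this proposition at all: it is imported verbatim from \cite[(2.23) and Theorem~2.17]{chelkak}, as the bracketed citation in the proposition header indicates. So there is no ``paper's own proof'' to compare against, and your attempt is better read as a sketch of \emph{Chelkak's} proof. At that level your roadmap is sound, and your closing decision to invoke \cite{chelkak} directly for the hard technical steps is exactly what the paper itself does.

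Two cautions worth recording. First, your sub/super-harmonicity split has the conventional signs reversed: in this literature $H^\delta$ is \emph{sub}harmonic on $G^\bullet$ and \emph{super}harmonic on $G^\circ$, which is consistent with $H^\delta(v^\bullet)-H^\delta(v^\circ)=X_{(v^\bullet v^\circ)}^2\geq 0$ from \eqref{eq:discrete-h-def} and with the boundary values $H^\delta=1$ on the wired (primal) arc, $H^\delta=0$ on the free (dual) arc. Second, and more structurally, the clean two-Laplacian picture you describe belongs to the isoradial setting of Chelkak--Smirnov. In the s-embedding framework the operative positivity is the single inequality $\Delta_{\mathcal S}H^\delta=-4\bar\partial_\omega(F^\delta)^2\geq 0$ of \eqref{eq:s-positivity}, where $\Delta_{\mathcal S}$ mixes primal and dual vertices and carries a sign flip on the same-type terms (Lemma~\ref{lem:s-laplacian}); the signs of its coefficients are not controlled in the generality of $\operatorname{UNIF}(\delta)$, so the maximum principle and Harnack steps are not a verbatim application of classical divergence-form De Giorgi/Moser theory to two separate random-walk Laplacians. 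Extracting regularity from this weaker structure is precisely the technical content of \cite[Section~2]{chelkak}. Since you ultimately defer to that reference, your conclusion is correct, but the sketch somewhat misplaces where the difficulty actually sits.
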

By Arzel\`a-Ascoli, this means there is a subsequential limit $f_0$ on any $\Omega_0 \Subset \Omega$ to which $F^\delta$ converges uniformly; by diagonalizing, we may obtain a limit $f_d$ towards which $F^\delta$ converges uniformly on compact subsets along a subsequence. If we can demonstrate $f_d = f$, we will have Theorem \ref{thm:martingale-convergence}.

Thanks to uniform convergence, it is clear that $H^\delta$ (again, as a function on $\Omega^\delta$) also converges uniformly on compacts to some $h_d = \Im \int f_d^2dz+i|f_d|^2 d\vartheta \in [0,1]$. In fact, \cite[Proposition 2.21]{chelkak} (and the discussion below) observes that $\phi_d:U\to\mathbb C$ defined under \eqref{eq:phi-definition} is holomorphic, since $\vartheta$ is maximal, therefore $h_d\circ z = \Im \int \phi_d^2 d\zeta$, as well as $h_d$ itself (by \eqref{eq:continuum-h-definition}), is harmonic. Then the following Proposition, proved next section, would imply $h_d = h$, therefore $f_d = f$:
\begin{proposition}\label{prop:laplace-beltrami}
    Near the boundary arcs $(ab), (ba)$, the subsequential limit $h_d$ continuously takes the boundary values \eqref{eq:continuum-h-definition}.
\end{proposition}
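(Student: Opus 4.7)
The plan is to upgrade the interior uniform-on-compacts convergence $H^\delta\to h_d$ to continuous boundary values on $(ab)\cup(ba)$, which Arzel\`a--Ascoli does not supply for free. The classical route is a barrier argument: build a continuum $\Delta^\Sigma$-harmonic supersolution near a boundary point, sample it on the s-embedding, and compare with $H^\delta$ by a discrete maximum principle, absorbing the consistency defect into the $o(1)$ of the subsequential limit. I sketch the $(ab)$ side; the $(ba)$ side follows by running the same argument with $1-H^\delta$ in place of $H^\delta$.

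First I would extract from Section \ref{subsec:s-laplacian-calculation} the main quantitative input: a pair of discrete difference operators $\Delta^{\mathcal S^\delta}_\pm$ approximating (the principal symbol of) $\Delta^\Sigma$ in the scaling limit, such that $H^\delta$ is discretely sub-/super-harmonic for them up to $o(1)$ on compacts (combining \eqref{eq:discrete-integral}, the a-priori bound $|F^\delta|=O(\delta^{-1/2})$ from \eqref{eq:regularity}, s-holomorphicity of $F^\delta$, and the maximality $\Delta^\Sigma\vartheta=0$). Next I would construct, for any $w\in(ab)$ and small $\epsilon>0$, a continuum barrier $\Psi_\epsilon:\Omega\to[0,\infty)$ that is $\Delta^\Sigma$-harmonic, satisfies $\Psi_\epsilon\geq1$ outside a ball $B_{r(\epsilon)}(w)$, and satisfies $\limsup_{z\to w}\Psi_\epsilon(z)\leq\epsilon$. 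Because $\Delta^\Sigma=l^{-2}\Delta_\zeta$ in the isothermal parametrization $z:U\to\Omega$, this reduces to a standard Euclidean harmonic-measure construction at the prime end $z^{-1}(w)$, for which the Carath\'eodory hypothesis is exactly what is needed.

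Then I would sample $\Psi^\delta_\epsilon:=\Psi_\epsilon\circ\mathcal S^\delta$ on primal and dual vertices; smoothness of $\Psi_\epsilon$ together with the consistency of $\Delta^{\mathcal S^\delta}_\pm$ force $|\Delta^{\mathcal S^\delta}_\pm\Psi^\delta_\epsilon|=o(1)$ on compacts. On the boundary of $B_{r(\epsilon)}(w)\cap\Omega^\delta$ we have $H^\delta\leq\Psi^\delta_\epsilon+o(1)$: on the discrete circular arc because $\Psi^\delta_\epsilon\geq1\geq H^\delta$, and on the portion of $(a^\delta b^\delta)^\circ$ because $H^\delta=0$ by \eqref{eq:discrete-h-boundary} while $\Psi^\delta_\epsilon\geq0$. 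A discrete maximum principle for $\Delta^{\mathcal S^\delta}_+$ applied to $H^\delta-\Psi^\delta_\epsilon$ (the $o(1)$ defect is absorbable since the region has macroscopic size) then propagates the inequality into the interior. Passing to the limit along the subsequence defining $h_d$ yields $h_d\leq\Psi_\epsilon$ in a neighborhood of $w$; using also $h_d\geq0$ (since $H^\delta\geq0$ from the $X_c^2$ representation) and sending $\epsilon\to0$ after $z\to w$ gives $h_d(z)\to0$.

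The hard part will be Step 1. In the flat setting of \cite{chelkak}, $H^\delta$ is essentially sub-/super-harmonic for the ordinary planar Laplacian because the tangent planes to $\Sigma$ are globally horizontal. In the non-flat setting, the s-embedding cross-ratio structure couples with the extrinsic geometry of $\Sigma$, and one must identify at each vertex $a$ the correct tangent-plane Laplacian $\Delta^{\Sigma(a)}$ from \eqref{eq:tangent-plane-def} and control the first-order piece $D^{\Sigma(a)}$. The strategy outlined in the Introduction --- local flattening by a Minkowski isometry preserving the s-embedding structure, followed by an error analysis in which maximality $\vec H=0$ is exactly what kills the leading curvature correction --- is what makes the comparison close at $o(1)$. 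Tracking the resulting remainder terms both on $H^\delta$ and on $\Psi^\delta_\epsilon$ is the technical heart of the proof.
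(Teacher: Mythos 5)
Your barrier strategy is a genuinely different route from the paper's, which does \emph{not} run a discrete comparison argument. The paper instead mollifies $H^\delta$ at a variable scale $\rho_u=\delta^\epsilon\operatorname{crad}(u,\Omega^\delta)$ to get $\tilde H^\delta$, proves a uniform continuum bound $\Delta^\Sigma\tilde H^\delta(u)=O(\delta^\alpha d_u^{-2+\alpha})$ valid on a $\delta^{1-\eta}$-interior by transforming $\Delta^\Sigma\tilde H^\delta$ through Steps~0--7 into a discrete sum against $\Delta_{\mathcal S}H^\delta\geq0$ (s-positivity), and then imports RSW-type crossing estimates from \cite{mahfouf} to bridge the remaining thin strip to $\partial\Omega^\delta$. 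Your approach would, in principle, be cleaner since it bypasses the interior/boundary split, but it has gaps that are not merely technical.

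The central gap is the existence of your operators $\Delta^{\mathcal S^\delta}_\pm$. You want them to simultaneously (i) approximate $\Delta^\Sigma$ pointwise on smooth test functions, (ii) see $H^\delta$ as sub/super-harmonic, and (iii) satisfy a discrete maximum principle. The paper's s-Laplacian $\Delta_{\mathcal S}=-4\partial_\omega\bar\partial_{\mathcal S}$ delivers (i) (this is the content of Steps~1--7 applied to smooth functions) and (ii) exactly via s-positivity $\Delta_{\mathcal S}H^\delta\geq0$, but it decisively \emph{fails} (iii): by Lemma~\ref{lem:s-laplacian}/\cite[Proposition 3.7]{chelkak} its same-type coefficients $a_{vv'}$ appear with opposite signs on $G^\bullet$ and $G^\circ$, so $\Delta_{\mathcal S}$ is not a generator of a random walk and a pointwise $\Delta_{\mathcal S}(H^\delta-\Psi^\delta_\epsilon)\geq -o(\cdot)$ does not propagate a boundary inequality inward. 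One can restrict to the primal (or dual) sublattice to recover genuine sub/superharmonicity for a true random-walk Laplacian, but then consistency with $\Delta^\Sigma$ in the non-flat setting is no longer ``extracted from Section~\ref{subsec:s-laplacian-calculation} for free'': it is the main estimate of the paper, which goes through the mollifier precisely because the natural discrete operator that has the sign property does not have a maximum principle, and vice versa.

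Two secondary gaps. First, you write that the $o(1)$ defect ``is absorbable since the region has macroscopic size,'' but a discrete Laplacian has $O(\delta^{-2})$ coefficients and the comparison region near a boundary point contains $O(\delta^{-2})$ vertices, so a pointwise defect of size $o(1)$ (in the sense of $o(\delta^{-2})$) integrates to something that is not automatically small; the paper's Steps~0--7 exist exactly to chase down polynomial-in-$\delta$ exponents so that the accumulated error is $O(\delta^\alpha d_u^{-2+\alpha})$. Second, your barrier $\Psi_\epsilon$ is not smooth up to $w$, and the consistency estimate $|\Delta^{\mathcal S^\delta}_\pm\Psi^\delta_\epsilon|=o(1)$ degenerates precisely where you most need the comparison; without the RSW input near $\partial\Omega^\delta$ (or an equivalent substitute) this degeneration is not controlled. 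In short, the conceptual shape of your argument is classical and attractive, but the key object it presupposes --- a discrete operator that is at once consistent, sign-compatible with $H^\delta$, and maximum-principle-compliant on $G^\bullet\cup G^\circ$ --- is exactly what this lattice does not offer, and the paper's mollification route is the workaround.
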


\subsection{Proof of Proposition \ref{prop:laplace-beltrami}}\label{subsec:s-laplacian-calculation}
We adapt the strategy in \cite[Section 4]{chelkak}; we will not elaborate on details already explained there to the same extent. We write $\epsilon, \eta, \gamma,\alpha>0$ for exponents to be specified later.

First of all, we mollify the discrete function $H^\delta(u)$ on the (variable) radius $\rho_u:=\delta^\epsilon \operatorname{crad}(u,\Omega^\delta)$ with $\epsilon \ll 1$ for $u \in \Omega_{\operatorname{int}(\eta)}^\delta$, where $\Omega_{\operatorname{int}(\eta)}^\delta$ is the $\delta^{1-\eta}$-interior of $\Omega^\delta$ and $\operatorname{crad}(u,\Omega^\delta) \asymp d_u:=\operatorname{dist}(u,\partial\Omega^\delta)$ is the conformal radius. Taking a standard unit radius mollifier $\varphi_0$ and re-scaling $\varphi^u(w):= \rho_u^{-2}\cdot\varphi_0(\rho_u^{-1}\cdot(w-u))$, we have the following definition and subsequent bound \cite[(4.2), (4.3)]{chelkak},
\begin{equation}\label{eq:mollifier}
\tilde{H}^\delta(u):= \int_{\Omega^\delta}\varphi(w,u)H^\delta(w)d^2w,\text{\,\,\,\,and\,\,\,\,}|H^\delta-\tilde{H}^\delta| = O(\delta^\epsilon) \text{ uniformly on }\Omega_{\operatorname{int}(\eta)}^\delta.
\end{equation}
Then, we seek a uniform bound on $u\in \Omega_{\operatorname{int}(\eta)}^\delta$ of the type {\cite[(4.5)]{chelkak}}
\begin{equation}\label{eq:laplacian-bound}
\Delta^\Sigma \tilde{H}^\delta(u) = O(\delta^\alpha d_u^{-2+\alpha})
\end{equation}
which precisely controls the continuity of $\tilde{H}^\delta(u)$ up to the boundary (e.g. see \cite[Lemma A.2]{chelkak}). Then, the remaining boundary continuity argument for $H^\delta$ (and therefore its subsequential limit $h_d$), as in \cite[Section 4.1]{chelkak}, is to use the boundary regularity of $\tilde H^\delta$ given by \eqref{eq:laplacian-bound}, the known boundary values of $H^\delta$, and the closeness of these two functions \eqref{eq:mollifier}. This is not sufficient since we are still in the strictly smaller domain $\Omega_{\operatorname{int}(\eta)}^\delta \subsetneq \Omega^\delta$ (i.e. the values of $H^\delta$ on $\partial\Omega_{\operatorname{int}(\eta)}^\delta$ may a priori not be close to its values on $\partial\Omega^\delta$), but see \cite[Section 4.3]{chelkak} for argument bootstrapping RSW-type estimates (shown in \cite{mahfouf} for non-flat embeddings) to show boundary regularity of $H^\delta$ on thin strips near $\partial \Omega^\delta$ (which would include $\partial\Omega_{\operatorname{int}(\eta)}^\delta$ if we choose small enough $\eta>0$), completing the proof of Proposition \ref{prop:laplace-beltrami}.

What remains is to show a bound of type \eqref{eq:laplacian-bound}. We need to work with functions defined on $G^\bullet \cup G^\circ$ and/or $\lozenge (G)$: we start by defining some natural discrete differential operators here.
\begin{definition}
    Given a function $I$ locally defined on $G^\bullet \cup G^\circ$, we define $\bar \partial_{{\mathcal S}}I$, a function locally defined on $\lozenge G$, by the following formula: if $z = (v_0^\bullet v_0^\circ v_1^\bullet v_1^\circ)$,
    $$
    \bar \partial_{{\mathcal S}}I(z) := \frac{\mu_z}{4}\left[\frac{I(v_0^\bullet)}{\mathcal S(v_0^\bullet)-\mathcal S(z)}+ \frac{I(v_1^\bullet)}{\mathcal S(v_1^\bullet)-\mathcal S(z)} - \frac{I(v_0^\circ)}{\mathcal S(v_0^\circ)-\mathcal S(z)} - \frac{I(v_1^\circ)}{\mathcal S(v_1^\circ)-\mathcal S(z)}\right],
    $$
    where $\mu_z$ is a constant set to have $\bar \partial_{\mathcal S} \bar {\mathcal S} = 1$. We define $\partial_{\mathcal S}I := \overline{\bar \partial_{\mathcal S}\bar I}$.

    Similarly, given $K$ locally defined on $\lozenge G$, we define $\bar \partial_{\omega}K$ locally on $G^\bullet \cup G^\circ$ by (recall the definition of primal-facing orientations $\operatorname{n}$ from \eqref{eq:smirnov}):
    $$
    \partial_{\omega}K(v):= \frac{1}{2i}\sum_{z_k \sim v}K(z_k)(\operatorname{n}(c_{k+1})-\operatorname{n(c_k)),}
    $$
    where $z_k \sim v$ enumerates incident quads $z_k$ of $v$ counterclockwise, and $c_{k+1},c_k\in \Upsilon(G)$ are the two medial vertices incident to $z_k$ (i.e. sides of the quadrilateral) which are both incident to $v$, again enumerated counterclockwise and modulo the number of incident quads (see Figure \ref{fig:lattice}). Again, define $\partial_\omega K := \overline{\bar \partial_\omega \bar K}$.

    Then, define the \emph{s-laplacian}, locally defined on $G^\bullet \cup G^\circ$, as $\Delta_{\mathcal S}I := -4\partial_\omega \bar \partial_{\mathcal S}I$.
\end{definition}
Note that these discrete operators may equivalently be thought of as matrices, e.g. $\bar \partial_{{\mathcal S}}$ as a matrix whose columns and rows are respectively labelled by $G^\bullet \cup G^\circ$ and $\lozenge G$, and transposition transforms it to an operator of the opposite type. We have the following useful properties:
\begin{lemma}\label{lem:s-laplacian}
    We have $\partial_{\mathcal S}H^\delta = \frac{1}{4i}(F^\delta)^2$ and $\bar\partial_{\mathcal S}1=\bar\partial_{\mathcal S}\mathcal S=\bar\partial_{\mathcal S}\mathcal Q =0$.

    $\Delta_\mathcal S$ has real coefficients and is symmetric. That is, $\Delta_\mathcal S = -4\partial_\omega \bar \partial_{\mathcal S} = -4\bar \partial_\omega  \partial_{\mathcal S} = \Delta_\mathcal S ^{\mathsf T}$.
    
    Under an isometry of $\mathbb R^{2,1}$, it remains invariant.
\end{lemma}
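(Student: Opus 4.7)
The lemma contains three distinct assertions, which I plan to attack separately.

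For the first part, the annihilating identities $\bar\partial_\mathcal S 1 = \bar\partial_\mathcal S \mathcal S = \bar\partial_\mathcal S\mathcal Q = 0$ are quad-local, and I would verify them at each tangential quad $z$ via the inscribed-circle parametrization: placing $\mathcal S(z)$ at the origin, write $\mathcal S(v_j) - \mathcal S(z) = (r/\sin\alpha_j)\,e^{i\phi_j}$, where $r$ is the inradius and $\alpha_j$ is the half-interior angle at the corner $v_j$. Then $\bar\partial_\mathcal S 1(z) = 0$ reduces to $\sum_j \pm \sin\alpha_j\, e^{-i\phi_j} = 0$, which telescopes upon passing to the tangent-points parametrization. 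The identity $\bar\partial_\mathcal S \mathcal S = 0$ follows from $\bar\partial_\mathcal S 1 = 0$ together with $\sum_j \pm 1 = 0$, and $\bar\partial_\mathcal S \mathcal Q = 0$ follows by writing $\mathcal Q$ in terms of the tangent lengths $t_v = r\cot\alpha_v$ (from the Pitot characterization) and observing a companion telescope $\sum_v \cos\alpha_v\, e^{-i\phi_v} = 0$. Once these are in hand, $\partial_{\mathcal S}H^\delta = \frac{1}{4i}(F^\delta)^2$ falls out of \eqref{eq:discrete-integral} together with s-holomorphicity of $F^\delta$: on any quad $z$, $H^\delta$ on the four corners agrees up to a constant with $\frac{F(z)^2 \mathcal S - \overline{F(z)^2}\bar{\mathcal S}}{4i} + \frac{|F(z)|^2}{2}\mathcal Q$, so $\bar\partial_\mathcal S$ isolates the $\bar{\mathcal S}$ term and produces $-\overline{F(z)^2}/(4i)$; conjugation (since $H^\delta$ is real) gives the claim.

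For the second part, both reality and symmetry follow from a discrete Green identity
\[
\sum_{v \in G^\bullet\cup G^\circ} I_1(v)\, \Delta_\mathcal S I_2(v) \;=\; -\sum_{c=(v^\bullet v^\circ)\in\Upsilon(G)} w_c\, \bigl(I_1(v^\bullet) - I_1(v^\circ)\bigr)\bigl(I_2(v^\bullet) - I_2(v^\circ)\bigr),
\]
with real weights $w_c$ attached to medial edges. To derive this, I would start from $\sum_v I_1(v)(-4\partial_\omega \bar\partial_\mathcal S I_2)(v)$, swap to a sum indexed by quads, and at each quad use $\bar\partial_\mathcal S 1 = 0$ (for both arguments) to rewrite $\bar\partial_\mathcal S I_j$ as a linear combination of side-differences $I_j(v^\bullet) - I_j(v^\circ)$. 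The quadratic form that results decomposes edge-by-edge, and the tangential-quad identities combine with the inscribed-circle geometry to pin down the coefficients as real numbers. The $I_1 \leftrightarrow I_2$ symmetry is manifest, and $\Delta_\mathcal S = -4\bar\partial_\omega \partial_\mathcal S$ is then obtained by conjugating the original definition and using reality.

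For the third part, I would exploit a uniqueness characterization: $\bar\partial_\mathcal S$ is the unique operator that (i) is quad-local, (ii) annihilates $1, \mathcal S, \mathcal Q$, and (iii) satisfies $\bar\partial_\mathcal S \bar{\mathcal S} = 1$. Under an isometry of $\mathbb R^{2,1}$, the affine span $\mathrm{span}\{1,\mathcal S,\mathcal Q\}$ is preserved, so only the normalization shifts, causing $\bar\partial_\mathcal S$ to transform by a scalar factor. The operator $\partial_\omega$ transforms by a compensating scalar factor since $n$ is a directional quantity derived from $\mathcal S$; the factors cancel in $\Delta_\mathcal S = -4\partial_\omega \bar\partial_\mathcal S$. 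Translations are an immediate special case (everything depends only on differences), while the planar rotations and Lorentz boosts are handled uniformly by this mechanism. The main obstacle is the second part, specifically the verification that the edge weights $w_c$ in the Dirichlet form are real; although routine in principle, this requires careful bookkeeping of the quad-incidence pattern around each vertex and repeated application of the tangential-quad identities proved in the first part, and it is here that the geometry of s-embeddings enters the argument non-trivially.
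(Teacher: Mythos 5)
Your proposal diverges from the paper's proof, which simply cites \cite[Corollary 3.3, Lemma 3.2, Proposition 3.7]{chelkak} for the first two assertions and establishes Lorentz invariance directly: starting from the explicit coefficient formula of \cite[Lemma 6.1]{chelkak}, the paper observes that the coefficients of $\Delta_{\mathcal S}$ are built from the weight parameters $\theta_e$ (invariant because the transformed graph is an s-embedding of the same weighted graph) and from quantities of the form $\rho_s\rho_{s+1}\sin\phi_{s+1}$, which it rewrites as Minkowski inner products $\langle (\mathcal S,\mathcal Q)(w_1)-(\mathcal S,\mathcal Q)(v),(\mathcal S,\mathcal Q)(w_2)-(\mathcal S,\mathcal Q)(v)\rangle_{\mathbb R^{2,1}}$, which are tautologically invariant. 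Your route for invariance has a genuine gap that cannot be repaired as stated: the uniqueness characterization does pin down $\bar\partial_{\mathcal S}$ at each quad as the unique $\mathbb C$-linear functional vanishing on $\operatorname{span}_{\mathbb C}\{1,\mathcal S,\mathcal Q\}$ and normalized on $\bar{\mathcal S}$, but under a Lorentz boost one has $\mathcal S' = \tfrac{\gamma+1}{2}\mathcal S + \tfrac{\gamma-1}{2}\bar{\mathcal S}-\gamma\beta\,\mathcal Q + \mathrm{const}$, so $\mathcal S'\notin\operatorname{span}_{\mathbb C}\{1,\mathcal S,\mathcal Q\}$ (indeed $\bar\partial_{\mathcal S}\mathcal S' = \tfrac{\gamma-1}{2}\neq 0$), and the kernel is \emph{not} preserved. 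The ``affine span'' you invoke is the four-dimensional real span of $1,\Re\mathcal S,\Im\mathcal S,\mathcal Q$, which is preserved, but that is not the kernel of the $\mathbb C$-linear operator $\bar\partial_{\mathcal S}$; hence $\bar\partial_{\mathcal S'}$ is not a scalar multiple of $\bar\partial_{\mathcal S}$. The companion claim that $\partial_\omega$ transforms by a compensating scalar is likewise unjustified: its coefficients are differences of the unit vectors $\operatorname{n}(c)$, which under a boost transform by a M\"obius map $n\mapsto\frac{(\gamma+1)n-\gamma\beta}{(\gamma+1)-\gamma\beta n}$, so the coefficient differences $\operatorname{n}(c_{k+1})-\operatorname{n}(c_k)$ pick up a factor that depends on $c_k$ and $c_{k+1}$, not a uniform scalar. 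Your argument does succeed for translations and planar rotations, but these are exactly the trivial cases; the whole point of the Lorentz-invariance statement is the boost case. Separately, your proposed Dirichlet form for symmetry sums only over medial edges $c\in\Upsilon(G)$, but the explicit form of $\Delta_{\mathcal S}$ (used in the paper's invariance proof) has nonzero coefficients $a_{vv'}$ between \emph{same-type} (primal--primal and dual--dual) vertices diagonally opposite within a quad; a quadratic form supported only on opposite-type side-differences cannot produce these terms, so that part of your argument is incomplete as written.
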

\begin{proof}
    See \cite[Corollary 3.3, Lemma 3.2, Proposition 3.7]{chelkak} for everything except for invariance. From the same Proposition, we have
    $$
    \Delta_{\mathcal S}I(v) = \pm \sum_{v'\stackrel{s}{\sim} v}a_{vv'}(I(v')-I(v)) + \sum_{w\stackrel{d}{\sim} v}b_{vw}(I(w)-I(v)),
    $$
    where $\pm$ is $+$ if $v\in G^\bullet$ and $-$ if $v\in G^\circ$, and $v'\stackrel{s}{\sim} v$ enumerates adjacent $v' \in G^\bullet \cup G^\circ$ which is of the same type as $v$ (i.e. either both primal or both dual) whereas $w\stackrel{d}{\sim} v$ enumerates incident $w$ of opposite type.
    
    The real coefficients $a_{vv'}, b_{vw} \in \mathbb R$ are explicitly described in the proof of \cite[Lemma 6.1]{chelkak}. Using their notation, note that $\theta_s$ is the weight parameter and is therefore invariant under isometry. Then it is easy to see that the rest is a function of terms of type $\rho_s\rho_{s+1}\sin\phi_{s+1}$; parsing this notation, if $(vw_1)$ and $(vw_2)$ are two medial vertices with some half-angle $\Phi \in [0, \pi]$ between them, this quantity is exactly $|\mathcal S(w_1)-\mathcal S(v)|^{\frac12}|\mathcal S(w_2)-\mathcal S(v)|^{\frac12} \sin{\Phi} \geq 0$. Then, in terms of the inner products on $\mathbb R^2$ and $\mathbb R^{2,1}$, we have
    \begin{align*}
        |\mathcal S(w_1)-\mathcal S(v)|&|\mathcal S(w_2)-\mathcal S(v)| \sin^2{\Phi} = |\mathcal S(w_1)-\mathcal S(v)||\mathcal S(w_2)-\mathcal S(v)| \cdot \frac{1-\cos 2\Phi}{2}\\
        &= \frac{(\mathcal Q(w_1)-\mathcal Q(v))(\mathcal Q(w_2)-\mathcal Q(v)) -\langle \mathcal S(w_1)-\mathcal S(v), \mathcal S(w_2)-\mathcal S(v) \rangle_{\mathbb R^2}}{2}\\
        &=-\frac{\langle (\mathcal S, \mathcal Q)(w_1)-(\mathcal S, \mathcal Q)(v), (\mathcal S, \mathcal Q)(w_2)-(\mathcal S, \mathcal Q)(v) \rangle_{\mathbb R^{2,1}}}{2},
    \end{align*}
    evidently invariant under isometry.
\end{proof}

A crucial ingredient in the proof is the so-called \emph{s-positivity} property, which says that the integrated Ising observables $H^\delta$ have positive s-laplacians \cite[Corollary 3.8]{chelkak}:
\begin{equation}\label{eq:s-positivity}
    \Delta_{\mathcal S}H^\delta = -4\bar \partial_\omega (F^\delta)^2 \geq 0.
\end{equation}

The strategy for \eqref{eq:laplacian-bound} is to carefully transform $\Delta^\Sigma\tilde{H}^\delta(u)=\Delta^\Sigma_u\tilde{H}^\delta(u)$ step-by-step into a quantity with which we are able to use \eqref{eq:s-positivity}, carefully managing the resulting errors. Note the oscillation of $H^\delta$ is bounded by $O(\rho_ud_u^{-1})$ from \eqref{eq:regularity} and \eqref{eq:discrete-integral}, and the left-hand side is invariant under vertical translations, so we will assume $H^\delta=O(\rho_ud_u^{-1})$. For ease of reference, we use the same step numbers as in \cite[Section 4.2]{chelkak} for corresponding steps.

\subsubsection*{Step 0} $\Delta^\Sigma_u\tilde{H}^\delta(u) = \Delta^{\Sigma(u)}_u\tilde H^\delta(u) + O(d_u^{-1})$.
\begin{proof}
    We use the tangent plane laplacian from \eqref{eq:tangent-plane-def}, and need to bound $D^{\Sigma(u)}H^\delta(u)$. Since the second derivatives of $\vartheta$ are bounded and $|\nabla \vartheta|$ is bounded away from $1$ by assumption, it suffices to note
    \begin{align*}
        \nabla_u \tilde H^\delta(u) &= \int_{\Omega^\delta} \nabla_u \varphi^u(w)H^\delta(w) d^2w\\
        &=\int_{\Omega^\delta} (-\nabla_w +O(\delta^\epsilon\rho_u^{-3}))\varphi^u(w)H^\delta(w) d^2w\\
        &=\int_{\Omega^\delta} \varphi^u(w)\nabla_w H^\delta(w)  d^2w+O(d_u^{-1}) =O(d_u^{-1}),
    \end{align*}
    since $\varphi^u(w):= \rho_u^{-2}\cdot\varphi_0(\rho_u^{-1}\cdot(w-u))$, integrating by parts. 
\end{proof}

\subsubsection*{Step 1} $\Delta^{\Sigma(u)}_u\tilde{H}^\delta(u) = \int_{\Omega^\delta} \Delta^{\Sigma(w)}_w \varphi^u(w)H^\delta(w) d^2w+O(\delta^{\epsilon\beta}{d_u^{-2}}) + O(d_u^{-1})$.
\begin{proof}
    We may swap $\Delta^{\Sigma(u)}_u$ by $\Delta^{\Sigma(u)}_w$ as in the corresponding step in \cite[Section 4.2]{chelkak}, getting $O(\delta^{\epsilon(2+\beta)}{\rho_u^{-2}})=O(\delta^{\epsilon\beta}{d_u^{-2}})$ error, where $\beta$ is the exponent from \eqref{eq:regularity}. Then, we have
    $$
    \int_{\Omega^\delta} (\Delta^{\Sigma(w)}_w-\Delta^{\Sigma(u)}_w) \varphi^u(w)H^\delta(w) d^2w = O(d_u^{-1}),
    $$
    since $\Delta^{\Sigma(w)}_w-\Delta^{\Sigma(u)}_w$ has $O(\rho_u)$ coefficients (again by the $C^2$ bound on $\vartheta$) and $\varphi^u(w)$ has $O(\rho_u^{-4})$ second derivatives, using the bound of $H^\delta$ once more.
\end{proof}
\begin{remark}\label{rem:lorentz-covariance}
    We will be using `rotated', i.e. mapped under Lorentz isometry, s-embeddings henceforth, so let us define some notation. For $w\in \Omega$, we consider the isometry which sends the tangent plane $\vartheta^{(w)}$ of $\vartheta$ at $(w, \vartheta(w))$ to flat plane and $\mathcal Q^\delta$ at (any of the) closest vertex to $0$. Define $\vartheta_{[w]}$ and $\mathcal Q^\delta_{[w]}$ as the image of $\vartheta$ and $\mathcal Q^\delta$ under this isometry, and $\bar\partial_{\mathcal S_{[w]}},\bar\partial_{\omega_{[w]}}$, etc. as the operators on the rotated s-embedding. We will frequently use the $C^2$ bound for $\vartheta$ in the form $\vartheta_{[w]} = O(\delta^2)$ at points $\delta$ away from $w$. In particular, we may utilise the assupmtion $\operatorname{Approx}(0,\delta) = \operatorname{FLAT}(\delta)$ at $O(\delta^{1/2})$ distance scales, including single quad estimates \cite[Lemmas 3.9, 3.10]{chelkak}.

    $H^\delta$ is invariant under such re-embedding thanks to its embedding-agnostic definition; $F^\delta$ transforms to some $F_{[w]}^\delta$, which satisfies $(F_{[w]}^\delta)^2 = 4i\cdot \partial_{\mathcal S_{[w]}}H^\delta$ by Lemma \ref{lem:s-laplacian}.
    
    %plane spanned by $(\mathcal S^\delta,\mathcal \vartheta)(v_1^\bullet)-(\mathcal S^\delta,\mathcal \vartheta)(v_0^\bullet)$ and $(\mathcal S^\delta,\mathcal \vartheta)(v_1^\circ)-(\mathcal S^\delta,\mathcal \vartheta)(v_0^\circ)$.

\end{remark}
\subsubsection*{Step 2} \begin{align*} \int_{\Omega^\delta} \Delta^{\Sigma(w)}_w \varphi^u(w)H^\delta(w) d^2w=& \sum_{\substack{v\in G^\bullet\cup G^\circ:\\\mathcal{S}^\delta(v)\in B_{\rho_u}(u)}}\frac{(1+|\nabla\vartheta(\mathcal S^\delta(v))|)^{\frac12}}{(1-|\nabla\vartheta(\mathcal S^\delta(v))|)^{\frac12}}(\mathcal Q^\delta-\vartheta\circ\mathcal S^\delta)(v)H^\delta(v)\Delta_{\mathcal S}(\varphi^u\circ \mathcal S^\delta)(v)\\+O((\delta^{1-\gamma}&d_u^{-1}+\delta^\gamma+\delta^{1-3\gamma-\epsilon}d_u^{-1})\rho_u^{-2}) + O(\delta^{4(1-\gamma)}\rho_u^{-4}d_u^{-1}\cdot\delta^{-2(1-\gamma)}\cdot  \rho_u^2).
\end{align*}
\begin{proof}
    Cover $B_{\rho_u}(u)$ by $\delta^{1-\gamma}\times \delta^{1-\gamma}$ squares, with $2\epsilon < \gamma \ll 1$. For such a square $L$ and a fixed point $w_0 \in L$, we have
    \begin{equation}\label{eq:inside-L-laplacian}
    \int_{L} \Delta^{\Sigma(w)}_w \varphi^u(w)H^\delta(w) d^2w = \int_{L} \Delta^{\Sigma(w_0)}_{w} \varphi^u(w)H^\delta(w) d^2w + O(\delta^{4(1-\gamma)}\rho_u^{-4}d_u^{-1}),
    \end{equation}
    since the coefficients of $\Delta^{\Sigma(w)}_w-\Delta^{\Sigma(w_0)}_w$ are $O(\delta^{1-\gamma})$ and $H^\delta$ oscillates $O(\delta^{1-\gamma}d_u^{-1})$. 
    
    Similarly, the right-hand side sum, restricted to $L$, is equal to
    $$
    \sum_{\substack{v\in G^\bullet\cup G^\circ:\\\mathcal{S}^\delta(v)\in L}}\frac{(1+|\nabla\vartheta^{(w_0)}(\mathcal S^\delta(v))|)^{\frac12}}{(1-|\nabla\vartheta^{(w_0)}(\mathcal S^\delta(v))|)^{\frac12}}(\mathcal Q^\delta-\vartheta^{(w_0)}\circ\mathcal S^\delta)(v)H^\delta(v)\Delta_{\mathcal S}(\varphi^u\circ \mathcal S^\delta)(v),
    $$
    up to $O(\delta^{-2\gamma}\cdot\delta^{2-\gamma}\cdot  \delta\cdot\rho_u^{-4}\cdot \delta^{1-\gamma}\cdot d_u^{-1})=O(\delta^{4-4\gamma}\cdot\rho_u^{-4}\cdot d_u^{-1})$ error, thanks to $\operatorname{UNIF}(\delta)$, $\nabla \vartheta^{(w_0)}-\nabla \vartheta = O(\delta^{1-\gamma})$, $\vartheta^{(w_0)}- \vartheta = O(\delta^{2-2\gamma})$, and $\Delta_{\mathcal S}(\varphi^u\circ \mathcal S^\delta)(v) = O(\delta\cdot \rho_u^{-4})$. The last bound holds since the s-Laplacian (which has $O(\delta^{-1})$ coefficients) would kill up to first order terms in the Taylor expansion of $\varphi^u$ by Lemma \ref{lem:s-laplacian}.

    Then, under a Lorentz isometry which sends $\vartheta^{(w_0)}$ to the flat plane $z=0$, \eqref{eq:inside-L-laplacian} (i.e. the product of the area measure and the Laplace-Beltrami operator), as well as $\Delta_{\mathcal{S}}$ and $H^\delta$, remain invariant, while
    $\frac{(1+|\nabla\vartheta^{(w_0)}(\mathcal S^\delta(v))|)^{\frac12}}{(1-|\nabla\vartheta^{(w_0)}(\mathcal S^\delta(v))|)^{\frac12}}(\mathcal Q^\delta-\vartheta^{(w_0)}\circ\mathcal S^\delta) = \mathcal Q^\delta_{[w_0]}$. Then we may appeal to \cite[Proposition 3.12]{chelkak} as in Step 2 of \cite[Section 4.2]{chelkak} and obtain the claim.
\end{proof}

\subsubsection*{Step 3}
\begin{align*}
    &\sum_{\substack{v\in G^\bullet\cup G^\circ:\\\mathcal{S}^\delta(v)\in B_{\rho_u}(u)}}\frac{(1+|\nabla\vartheta(\mathcal S^\delta(v))|)^{\frac12}}{(1-|\nabla\vartheta(\mathcal S^\delta(v))|)^{\frac12}}(\mathcal Q^\delta-\vartheta\circ\mathcal S^\delta)(v)H^\delta(v)\Delta_{\mathcal S}(\varphi^u\circ \mathcal S^\delta)(v)\\
    =&-4\sum_{\substack{z\in \lozenge(G):\\\mathcal{S}^\delta(z)\in B_{\rho_u}(u)}}\partial_{\omega_{[z]}}^{\mathsf T}(\mathcal Q_{[z]}^\delta H^\delta)(z)\bar\partial_{\mathcal S_{[z]}}(\varphi^u\circ \mathcal S^\delta)(z) +O(d_u^{-1}),
\end{align*}
\begin{proof}
    `Summing by parts' and summing over quads instead of vertices gives rise to the transpose in the linear operator as usual; if we used a fixed factorisation $\Delta_{\mathcal S}=-4\partial_{\omega}\bar\partial_{\mathcal S}$ we wouldn't have an error term. Let us carefully study the error term arising from using spatially varying factorisation.
    
    A given quad $z$ appears four times in the original sum, from the s-Laplacians evaluated at each of its four vertex corners $v$. By Lorentz invariance, we may factor $\Delta_{\mathcal S}=-4\partial_{\omega_{[v]}}\bar\partial_{\mathcal S_{[v]}}$. Then, modifying each coefficient of $\partial_{\omega_{[v]}},\bar\partial_{\mathcal S_{[v]}}$ into the corresponding coefficient in $\partial_{\omega_{[z]}},\bar\partial_{\mathcal S_{[z]}}$, we may write
    \begin{align*}
    &\sum_{\substack{v\in G^\bullet\cup G^\circ:\\\mathcal{S}^\delta(v)\in B_{\rho_u}(u)}}\frac{(1+|\nabla\vartheta(\mathcal S^\delta(v))|)^{\frac12}}{(1-|\nabla\vartheta(\mathcal S^\delta(v))|)^{\frac12}}(\mathcal Q^\delta-\vartheta\circ\mathcal S^\delta)(v)H^\delta(v)\Delta_{\mathcal S}(\varphi^u\circ \mathcal S^\delta)(v)\\
    =-4&\sum_{\substack{z\in \lozenge(G):\\\mathcal{S}^\delta(z)\in B_{\rho_u}(u)}}\partial_{\omega_{[z]}}\left[\frac{(1+|\nabla\vartheta\circ\mathcal S^\delta|)^{\frac12}}{(1-|\nabla\vartheta\circ\mathcal S^\delta|)^{\frac12}}(\mathcal Q^\delta-\vartheta\circ\mathcal S^\delta)H^\delta\right](z)\bar\partial_{\mathcal S_{[z]}}(\varphi^u\circ \mathcal S^\delta)(z) +O(d_u^{-1}).
\end{align*}
This bound comes from $(\mathcal Q^\delta-\vartheta\circ\mathcal S^\delta)H^\delta = O(\delta \rho_ud_u^{-1})$ and the fact that, for the purposes of comparing coefficients of $\bar\partial_{\mathcal S_{[z]}}$ and $\bar\partial_{\mathcal S_{[v]}}$ (which both kill constants by Lemma \ref{lem:s-laplacian}), we may use the size $O(\delta \rho_u^{-3})$ oscillation of $\varphi^u\circ \mathcal S^\delta$ at distance scale $O(\delta)$.  Again by the $C^2$ bound on $\vartheta$, perturbing a coefficient of $\bar\partial_{\mathcal S_{[v]}}$ or $\bar\partial_{\omega_{[v]}}$ at this distance scale multiplies it by $O(\delta)$, and the total error is $ O(\rho_u^{2}\delta^{-2}\cdot\delta\cdot\delta \rho_ud_u^{-1}\cdot\delta^{-1}\delta\rho_u^{-3} )=O(d_u^{-1})$.

We also have 
$$
\frac{(1+|\nabla\vartheta^{}(\mathcal S^\delta(v))|)^{\frac12}}{(1-|\nabla\vartheta(\mathcal S^\delta(v))|)^{\frac12}}(\mathcal Q^\delta-\vartheta\circ\mathcal S^\delta) (v)= (\mathcal Q^\delta_{[z]}-\vartheta_{[z]}\circ\mathcal S^\delta_{[z]})(v) + O(\delta^2)=\mathcal Q^\delta_{[z]}(v) + O(\delta^2),
$$
by first transforming the left-hand side exactly to $(\mathcal Q^\delta_{[v]}-\vartheta_{[v]}\circ\mathcal S^\delta_{[v]})(v) = O(\delta)$ then noticing that perturbing it at $O(\delta)$ distance scale multiplies it by $O(\delta)$. Since we have $O(\rho_u^2 \delta^{-2})$ terms, the $O(\delta^2)$ term in the right-hand side again contributes $O(\rho_u^2 \delta^{-2}\cdot\delta^2\rho_ud_u^{-1}\cdot \rho_u^{-3})=O(d_u^{-1})$ in the final right-hand side.
\end{proof}

We now write
$$\partial_{\omega_{[z]}}^{\mathsf T}(\mathcal Q_{[z]}^\delta H^\delta)(z) = \frac{1}{4i}A_{[z]}F_{[z]}^\delta(z)^2+\frac12 B_{[z]}|F_{[z]}^\delta(z)|^2-\frac{1}{4i}C_{[z]}\overline{F_{[z]}^\delta(z)}^2,$$ as in \cite[Section 3.2]{chelkak}, for
$$
A_{[z]} = \partial_{\omega_{[z]}}^{\mathsf T}(\mathcal Q_{[z]}^\delta \mathcal S_{[z]}^\delta)(z),\,B_{[z]} = \partial_{\omega_{[z]}}^{\mathsf T}((\mathcal Q_{[z]}^\delta)^2)(z),\,\text{and}\,C_{[z]} = \partial_{\omega_{[z]}}^{\mathsf T}(\mathcal Q_{[z]}^\delta\bar {\mathcal S}_{[z]}^\delta)(z).
$$
Our goal for Step 4 is to estimate the contribution of the $B_{[z]}$ term.

\subsubsection*{Step 4}
\begin{align*}
    &\sum_{\substack{z\in \lozenge(G):\\\mathcal{S}^\delta(z)\in B_{\rho_u}(u)}}\frac12 B_{[z]}|F_{[z]}^\delta(z)|^2\bar\partial_{\mathcal S_{[z]}}(\varphi^u\circ \mathcal S^\delta)(z) \\ =& O(\delta^{1-\gamma}d_u^{-1}\rho_u^{-1}+\delta^{\beta(1-\gamma)}d_u^{-\beta}\rho_u^{-2}+\delta^{1-\gamma-\epsilon}d_u^{-1}\rho_u^{-2}+\delta^\gamma d_u^{-1}\rho_u^{-1}),
\end{align*}
\begin{proof}
As in Step 4 of \cite[Section 4.2]{chelkak} and Step 2, we again cover $B_{\rho_u}(u)$ with squares of size $\delta^{1-\gamma}\times \delta^{1-\gamma}$. In any square $L$ with $z_0\in L$, we have
    $$\sum_{\substack{z\in \lozenge(G):\\\mathcal{S}^\delta(z)\in L}}\frac12 B_{[z]}|F_{[z]}^\delta(z)|^2\bar\partial_{\mathcal S_{[z]}}(\varphi^u\circ \mathcal S^\delta)(z)=\sum_{\substack{z\in \lozenge(G):\\\mathcal{S}^\delta(z)\in L}}\frac12 B_{[z]}|F_{[z_0]}^\delta(z)|^2\bar\partial_{\mathcal S_{[z_0]}}(\varphi^u\circ \mathcal S^\delta)(z) + O(\delta^{3-3\gamma}\rho_u^{-3}d_u^{-1}),$$
    since $B_{[z]} = O(\delta^2)$ by \cite[Lemma 3.9]{chelkak}, and $|F_{[z]}^\delta(z)|^2  = |\bar\partial_{\mathcal S_{[z]}}H^\delta(z)|^2= O(d_u^{-1})$ by Remark \ref{rem:lorentz-covariance} and \eqref{eq:regularity}, and $\bar\partial_{\mathcal S_{[z]}}(\varphi^u\circ \mathcal S^\delta)(z) = O(\rho_u^{-3})$ as in Step 3. The error in perturbing the operator $\bar\partial_{\mathcal S_{[z]}}$ to $\bar\partial_{\mathcal S_{[z_0]}}$ was obtained by multiplying $O(\delta^{1-\gamma})$ to these typical sizes as usual, resulting in the term $O(\delta^{-2\gamma}\cdot\delta^2\cdot\delta^{1-\gamma}\cdot d_u^{-1} \rho_u^{-3}) = O(\delta^{3-3\gamma}\rho_u^{-3}d_u^{-1})$. Multiplying by the number of squares $\rho_u^2\delta^{-2(1-\gamma)}$ yields the first term in the final bound.

    Then further replacing $|F_{[z_0]}^\delta(z)|^2\bar\partial_{\mathcal S_{[z_0]}}(\varphi^u\circ \mathcal S^\delta)(z)$ by $|F_{[z_0]}^\delta(z_0)|^2\bar\partial_{\mathcal S_{[z_0]}}(\varphi^u\circ \mathcal S^\delta)(z_0)$ results in the second and third terms of the final bound, which are taken from corresponding terms from Step 4 of \cite[Section 4.2]{chelkak}, since we use exactly the same bounds.

    Then it remains to estimate
    \begin{align*}
    &|F_{[z_0]}^\delta(z_0)|^2\bar\partial_{\mathcal S_{[z_0]}}(\varphi^u\circ \mathcal S^\delta)(z_0)\sum_{\substack{z\in \lozenge(G):\\\mathcal{S}^\delta(z)\in L}}\frac12 B_{[z]}\\=&|F_{[z_0]}^\delta(z_0)|^2\bar\partial_{\mathcal S_{[z_0]}}(\varphi^u\circ \mathcal S^\delta)(z_0)\sum_{\substack{z\in \lozenge(G):\\\mathcal{S}^\delta(z)\in L}}\frac12\left[\partial_{\omega_{[z_0]}}^{\mathsf T}((\mathcal Q_{[z_0]}^\delta)^2)(z) + O(\delta^{1-\gamma}\delta^2)\right]\\
    =&|F_{[z_0]}^\delta(z_0)|^2\bar\partial_{\mathcal S_{[z_0]}}(\varphi^u\circ \mathcal S^\delta)(z_0)\cdot O(\delta\cdot\delta^{1-\gamma}+\delta^{1-\gamma}\delta^2\cdot\delta^{-2\gamma})=O(d_u^{-1}\rho_u^{-3}\delta^{2-\gamma}),
    \end{align*}
    since we have $ B_{[z]} = \partial_{\omega_{[z]}}^{\mathsf T}((\mathcal Q_{[z]}^\delta)^2)(z) =  \partial_{\omega_{[z_0]}}^{\mathsf T}((\mathcal Q_{[z_0]}^\delta)^2)(z) + O(\delta^{1-\gamma}\delta^2)$, where we again multiplied $O(\delta^{1-\gamma})$ to the typical term size $\delta^2$ to get the error term. We then used \cite[Proposition 3.11]{chelkak} to deal with the sum of $\frac12\partial_{\omega_{[z_0]}}^{\mathsf T}((\mathcal Q_{[z_0]}^\delta)^2)(z)$. Then it is straightforward to assemble the last term in the final bound.
\end{proof}
Now we study the remaining terms. In fact, the goal is not to trivialise them at this stage:
\subsubsection*{Step 5}
\begin{align*}
\Delta^\Sigma_u\tilde H^\delta(u)=&-4\sum_{\substack{z\in \lozenge(G):\\\mathcal{S}^\delta(z)\in B_{\rho_u}(u)}}\left( A_{[z]}\bar\partial_{\mathcal S_{[z]}}(\varphi^u\circ \mathcal S^\delta)(z)+\overline C_{[z]}\partial_{\mathcal S_{[z]}}(\varphi^u\circ \mathcal S^\delta)(z)\right)\partial_{\mathcal S_{[z]}}H^\delta +O(\ldots),
\end{align*}
where $O(\ldots)$ refers to the errors thus seen. This part is verbatim \cite[Section 4.2]{chelkak}.

\subsubsection*{Step 6}
\begin{align*}
&\Delta^\Sigma_u\tilde H^\delta(u)=-4\sum_{\substack{z\in \lozenge(G):\\\mathcal{S}^\delta(z)\in B_{\rho_u}(u)}}\partial_{\omega_{[z]}}^{\mathsf T}(\mathcal Q_{[z]}^\delta \cdot (\varphi^u\circ\mathcal S^\delta))(z)\partial_{\mathcal S_{[z]}^\delta}H^\delta (z)+O(\delta\rho_u^{-2}\cdot d_u^{-1})+O(\ldots)\\
=&-4\sum_{\substack{z\in \lozenge(G):\\\mathcal{S}^\delta(z)\in B_{\rho_u}(u)}}\partial_{\omega_{[z]}}^{\mathsf T}((\mathcal Q^\delta_{[z]}-\vartheta_{[z]}\circ\mathcal S^\delta_{[z]}) \cdot (\varphi^u\circ\mathcal S^\delta))(z)\partial_{\mathcal S_{[z]}^\delta}H^\delta(z) +O(d_u^{-1})+O(\ldots)\\
=&\sum_{\substack{v\in G^\bullet \cup G^\circ:\\\mathcal{S}^\delta(v)\in B_{\rho_u}(u)}}\frac{(1+|\nabla\vartheta(\mathcal S^\delta(v))|)^{\frac12}}{(1-|\nabla\vartheta(\mathcal S^\delta(v))|)^{\frac12}}\cdot(\mathcal Q^\delta_{[z]}-\vartheta_{[z]}\circ\mathcal S^\delta_{[z]})(v) \cdot (\varphi^u\circ\mathcal S^\delta)(v)\Delta_{\mathcal S^\delta}H^\delta +O(d_u^{-1})+O(\ldots)
\end{align*}
\begin{proof}
The first equality is virtually identical to Step 6 of \cite[Section 4.2]{chelkak}, noting that $O(\rho_u^2\delta^{-2}\cdot \delta^3\rho_u^{-4}\cdot d_u^{-1})=O(\delta\rho_u^{-2}\cdot d_u^{-1})$. Then, inserting $ \vartheta_{[z]} = O(\delta^2)$ costs $ O(\rho_u^2\delta^{-2}\cdot \delta^2\cdot \rho_u^{-2}\cdot d_u^{-1}) = O(d_u^{-1}).$ 

The last equality uses the same idea as Step 3; the error term comes from the $O(\delta)$ added multiplied to the typical size, so we have $O(\delta\cdot \rho_u^2\delta^{-2}\cdot\delta\cdot \rho^{-2}_u\cdot d_u^{-1}) = O(d_u^{-1})$, using $\Delta_{\mathcal S}H^\delta=-4\bar\partial_{\omega}\partial_{\mathcal S}H^\delta = i\bar \partial_\omega F^2 = O(d_u^{-1})$.

\end{proof}

\subsubsection*{Step 7}
$\Delta^\Sigma_u\tilde H^\delta(u)= O(\ldots)$. This part is verbatim \cite[Section 4.2]{chelkak}.

Recall $\beta$ is a small constant already fixed respectively by the regularity estimate \eqref{eq:regularity} and $\eta$ whose maximum is fixed by the RSW argument near the boundary. If we impose $0<2\epsilon<\gamma\ll1$ and $\epsilon <\min(\frac{1}{9},\frac{\beta}{2(1+\beta)})\eta\ll1$, it is a straightforward, albeit lengthy, check to see that there is some $\gamma, \alpha$ such that the accumulated error terms obey \eqref{eq:laplacian-bound} uniformly in $\Omega_{\operatorname{int}(\eta)}^\delta$. The proof is then complete.

\end{document}